\theoremstyle{plain}
\newtheorem{theorem}{Theorem}[section]
\newtheorem{theorem*}[theorem]{Theorem }
\newtheorem{proposition}[theorem]{Proposition}
\newtheorem{corollary}[theorem]{Corollary}
\newtheorem{lemma}[theorem]{Lemma}
\DeclareMathOperator{\End}{End}
\DeclareMathOperator{\Herm}{Herm}
\DeclareMathOperator{\Aut}{Aut}
\DeclareMathOperator{\rank}{rank}
\DeclareMathOperator{\id}{id}
\DeclareMathOperator{\Id}{Id}
\DeclareMathOperator{\tr}{tr}
\DeclareMathOperator{\Tr}{Tr}
\DeclareMathOperator{\Det}{Det}
\begin{document}
\title{Unitarizable vector-valued holomorphic series and the Laplace transform : an example}

\author{Jean-Louis Clerc}

\date{}
\maketitle

\abstract{For $T_\Omega$ a Hermitian symmetric  tube-type domain, a family $(\pi_\mu)_{\mu\in \mathbb C}$ of holomorphic vector-valued representations is studied. The corresponding Wallach set is determined. The main tool is a realization of the representations as weighted $L^2$-spaces on the cone $\Omega$ through the Laplace transform.}
\bigskip

{2020 MSC. Primary 22E46; Secondary 32M15; 44A10 \\Key words : tube-type domains, Euclidean Jordan algebra, holomorphic discrete series, weighted Bergman spaces, Laplace transform, Wallach set}
\bigskip

\hskip 4cm {\bf Dedicated to Karl Hofmann} 

 \hskip 4cm {\bf on the occasion of his 90th birthday}

\section*{Introduction}

The holomorphic discrete series has been studied intensively after its introduction by Harisch Chandra. In particular the analytic continuation of the series and the possible unitarizable representations beyond the discrete series is a difficult problem, solved in full generality in the 80's (see \cite{ehw}, \cite{j}). Earlier, for scalar-valued cases, the description of the \emph{Wallach set} \cite{w} was an important result. For tube-type domains in particular, the Laplace transform offers explicit and nice realizations of the singular representations (see \cite{rv}, \cite{fk}). There were a few attempts to study the vector-valued cases by a similar approach (see \cite{k}, \cite{c95}, \cite{d}, \cite{hn} ), but far from being conclusive. The present work addresses these questions on a modest but typical example.

The theory of tube-type domains which are Hermitian symmetric spaces is intimately connected to the theory of Euclidean Jordan algebras, as developed in \cite{fk}. To any (say) simple Euclidean Jordan algebra $J$ is associated a symmetric cone $\Omega$, and the tube-domain is $J\oplus i\Omega$. The group $G$ of bi-holomorphic automorphisms has a family of generators which are easily described in the framework of Jordan algebras. The maximal compact subgroup $U$ of $G$ is a real form of the  \emph{structure group} of $\mathbb J$, the complexification of $J$. Hence among the irreducible representations of $U$ one can single out those which are irreducible sub-representations of the natural action of $U$ on the polynomial algebra on $\mathbb J$. 
The simplest component is the space of degree one homogeneous polynomials (i.e. linear forms). This allows to define a family of holomorphic representations $(\pi_\mu), \mu\in \mathbb C$ (in fact projective representations, which can be realized as representations of the universal covering of $G$). For $\mu$ real-valued and large enough, the representations are unitary and realized on \emph{weighted Bergman spaces}. Their reproducing kernels $\mathcal Q_\mu$ are easy to determine and it is a natural question to ask for which values of $\mu$ is $\mathcal Q_\mu$ still positive-definite, or said in different terms to determine the \emph{Wallach set} corresponding to this family of unitarizable vector-valued holomorphic representations. Simple Euclidean Jordan algebras are characterized by three numbers $n,r,d$ which satisfy the relation $n=r+\frac{r(r-1)}{2} d$. The main theorem of the present article can be formulated as follows.\footnote{tacitly assumed is the condition $r\geq 2$, the case where $r=1$ corresponds to the complex half-line and has only scalar representations}

\begin{theorem}\label{Wallachset}
 The kernel $\mathcal Q_{\mu}$ is positive-definite if and only if
$\mu$ belongs to 
\[\left\{2\, \frac{d}{2}\,, 3\,\frac{d}{2}\,,\cdots, (r-1)\frac{d}{2}\right\} \bigcup\ \boldsymbol {\Big [} \frac{rd}{2}, +\infty\Big)\ .
\]
\end{theorem}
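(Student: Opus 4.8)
The plan is to transport the question, via the Laplace transform, into the positivity of an $\End(V)$-valued tempered measure on the closed cone $\overline\Omega$, where $V$ denotes the space of linear forms carrying the degree-one representation. Recall that for $\mu$ large the representation $\pi_\mu$ lives on a weighted Bergman space of $V$-valued holomorphic functions on $T_\Omega$, whose reproducing kernel has the shape $\mathcal Q_\mu(z,w)=\Delta\!\bigl(\tfrac{z-\overline w}{i}\bigr)^{-\mu}\,B(z,w)^{-1}$, with $B$ the Bergman operator carrying the degree-one cocycle. By the operator-valued Bochner theorem for tube domains, $\mathcal Q_\mu$ is positive-definite if and only if it is the Fourier--Laplace transform of a positive $\End(V)$-valued tempered measure $\mathcal M_\mu$ supported in $\overline\Omega$. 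The whole problem is thus to locate $\mathcal M_\mu$ and to decide for which $\mu$ it is positive.

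First I would compute $\mathcal M_\mu$ explicitly in Jordan-theoretic terms. Writing the Laplace transform of $\Delta^{-\mu}$ as the scalar Riesz distribution $R_\mu$ and feeding in the degree-one factor through the identity $\nabla\log\Delta(x)=x^{-1}$ (so that the quadratic representation $P(x)$ enters upon differentiating Gindikin's integral), I expect $\mathcal M_\mu$ to be an operator-valued Riesz distribution whose density on the open cone has the form $\Delta(x)^{\mu-c}\,P(x)^{\pm1}\,dx$ for a fixed shift $c$, continued analytically in $\mu$. For $\mu$ large this is manifestly a positive $\End(V)$-valued measure, since $P(x)>0$ on $\Omega$; the boundary behaviour, governed by the identity $\det P(x)=\Delta(x)^{2n/r}$, should then pin down the left endpoint of the continuous part and give the threshold $\mu=\tfrac{rd}{2}$, i.e. the effective scalar parameter is shifted by $\tfrac d2$ relative to the classical scalar Wallach problem.

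Next comes the discrete part. Following the scalar Gindikin theory (Faraut--Kor\'anyi, Ch.~VII), $\mathcal M_\mu$ extends to a positive operator-valued measure only at the Wallach points $\mu=k\,\tfrac d2$, where it concentrates on the rank-$k$ boundary orbit $\mathcal O_k$. On $\mathcal O_k$ a representative is conjugate to $\sum_{i\le k}\lambda_i c_i$, and in the Peirce decomposition the operator factor $P(x)$ is positive on the Peirce-one subalgebra of $c_1+\cdots+c_k$ and degenerate elsewhere; the task is to compute the residue operator of the meromorphic family $\mu\mapsto\mathcal M_\mu$ along $\mathcal O_k$ and to test its positivity. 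I would carry this out by restricting to the stabilizer of the face, decomposing $V$ under the associated Levi subgroup, and thereby reducing positivity to one scalar condition per Peirce block. The upshot should be that the residue operator is positive-semidefinite exactly for $k\ge 2$, yielding the points $2\tfrac d2,\dots,(r-1)\tfrac d2$, while the rank-one orbit $k=1$ fails because the relevant component of the degree-one representation does not occur with positive coefficient there, and $k=0$ is trivial. Equivalently, one finds that $\mathcal Q_\mu$ is positive-definite precisely when the scalar kernel at parameter $\mu-\tfrac d2$ is positive-definite and $\mu\neq\tfrac d2$, which reassembles exactly the set in the statement.

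The main obstacle is the operator-valued residue computation on the boundary strata: in the scalar case positivity at the Wallach points is already Gindikin's delicate analytic continuation, and here one must in addition control how $P(x)$ interacts with the Peirce grading as $x$ degenerates along $\mathcal O_k$, and verify that the limiting operator is positive for $k\ge2$ but fails at $k=1$. I expect this to be the technical heart of the argument; once the per-block scalar conditions are isolated, their simultaneous solvability should both exclude the rank-one point $\mu=\tfrac d2$ and fix the continuous threshold at $\mu=\tfrac{rd}{2}$.
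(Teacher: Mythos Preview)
Your overall strategy --- transport the positive-definiteness of $\mathcal Q_\mu$ via the Laplace transform to the existence of a positive $\End(\mathbb J)$-valued measure $dR_\mu$ on $\overline\Omega$ satisfying the covariance and normalization conditions, then compute it explicitly for large $\mu$, analytically continue, and test positivity on each boundary stratum --- is exactly the paper's approach. The gap is in the explicit shape you guess for $dR_\mu$.

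You expect the density on $\Omega$ to be $\Delta(x)^{\mu-c}P(x)^{\pm1}\,dx$, i.e.\ a scalar Riesz weight times the single operator $P(x)$. This is wrong, and the error propagates through everything that follows. A $K$-invariant $\Herm(\mathbb J)$-valued function at $e$ is \emph{two}-dimensional (Lemma~4.1 in the paper): besides $P(y)$ there is the rank-one operator $p_y\,v=(v\mid y)\,y$. Solving the covariance and normalization conditions forces
\[
dR_\mu(y)\;=\;\frac{1}{\mu(\mu+1)(\mu-\tfrac d2)\,\Gamma_\Omega(\mu)}\Bigl(\mu\,P(y)-\tfrac d2\,p_y\Bigr)(\det y)^{\mu}\,d^*y,
\]
and the continuous threshold $\mu=\tfrac{rd}{2}$ comes not from the identity $\det P(x)=\Delta(x)^{2n/r}$ (which only governs integrability of the scalar weight) but from the sharp operator inequality
\[
(p_y v\mid v)\;\le\; r\,(P(y)v\mid v)\qquad (y\in\overline\Omega),
\]
which makes $\mu P(y)-\tfrac d2\,p_y\ge0$ exactly when $\mu\ge\tfrac{rd}{2}$. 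If the density were merely $P(x)$ times a scalar, positivity would hold already for $\mu>(r-1)\tfrac d2$ and your threshold would be off.

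The same two-operator structure drives the discrete analysis. Analytic continuation of the Riesz family to $\mu=k\tfrac d2$ gives, on the rank-$k$ orbit, the density $k\,P(y)-p_y$ against the singular measure $\nu_k$; positivity for $2\le k\le r-1$ follows from the refined inequality $(p_y v\mid v)\le k\,(P(y)v\mid v)$ for $y\in\Omega^{(k)}$. At $k=1$ the two operators collapse, $P(c)=p_c$ for a primitive idempotent $c$, so the numerator vanishes identically and the formula is indeterminate; one must argue separately that any covariant $\Herm^+(\mathbb J)$-valued density on $\Omega^{(1)}$ is forced to be $\alpha P(x)$, and then a trace-versus-$(\,\cdot\,e\mid e)$ count ($n$ versus $r$) rules out the normalization. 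Your description (``the relevant component does not occur with positive coefficient'') and your Levi-decomposition plan do not capture this mechanism. The heuristic ``scalar Wallach set at $\mu-\tfrac d2$, minus $\mu=\tfrac d2$'' happens to give the right final set, but it is not what the argument actually proves.
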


In order to investigate other cases, the obstacle is in the complexity of the decomposition of the chosen representation of $L$, when restricted to the subgroup $K$ of automorphisms of the Jordan algebra $J$. In the present example, there are only two components and it is possible to handle the case. The investigation through the Laplace transform gives interesting insights in the problem, and our results gives some motivation for considering more examples. 

\section{Euclidean Jordan algebra and tube-type domains}

Let $J$ a simple Euclidean Jordan algebra. Our main reference on the subject is \cite{fk} and we usually follow their notation. In particular, a simple Euclidean Jordan algebra is characterized by three integers : its dimension $n$, its \emph{rank} $r$ and an integer $d$, which satisfy
\[n= r+ \frac{r(r-1)}{2}\, d\ .
\]

The  standard inner product on $J$ is given by 
\[x,y\in J, \qquad \qquad (x\vert y)=\tr(xy), \]
where $\tr$ is used for the \emph{trace} function on $J$.

Let $L$ be the neutral component of the \emph{structure group} of $J$. Then $L$ is a reductive group, and $K=L\cap O(J)$ is a maximal compact subgroup of $L$ which is also the stabilizer in $L$ of the unit element $e$ and the neutral component of the automorphisms group of $J$.

Denote by $\det$ the \emph{determinant} of the Jordan algebra $J$. Recall that $\det$ is a homogeneous polynomial of degree $r$ on $J$.

Let $\chi:L\longrightarrow \mathbb R^+$ be the character on $L$ which satisfies the following identity, 
\[\det \ell x = \chi(\ell) \det x\ ,
\]
for $\ell\in L$ and any $x\in J$.

Let $P$ be the quadratic representation of $J$, and recall that for any $x\in \Omega$ 
\begin{equation}\label{chiP}
P(x)\in L\quad \text{and} \quad \chi\big(P(x)\big) = (\det x)^2\ .
\end{equation}
Another useful formula, valid for any $\ell\in L$ is
\[\Det \ell = \chi(\ell)^{n/r}\ .
\]

Let $\Omega$ be the positive cone of $J$. The measure 
\[d^* x= (\det x)^ {-\frac{n}{r} }dx\] 
is invariant under the action of $L$.

Introduce the \emph{$\Gamma$-function} of the cone as the integral
\[\Gamma_\Omega(\lambda) =\int_\Omega e^{-\tr x} \det(x)^{\lambda} \, d^*x
\]
The integral converges absolutely for $\Re(\lambda)>(r-1)\frac{d}{2}= \frac{n}{r}-1$ and
\[\Gamma_\Omega(\lambda)= (2\pi)^{\frac{n-r}{2}} \Gamma(\lambda) \Gamma\left(\lambda-\frac{d}{2}\right)\dots \Gamma\left(\lambda-(r-1)\frac{d}{2}\right)\ .
\]

For $\alpha\in \mathbb C, \Re(\alpha)> (r-1)\frac{d}{2}$, recall the \emph{Riesz integral} given for a Schwarz function $\varphi\in \mathcal S(J)$ by
\begin{equation}\label{Riesz}
T_\alpha(\varphi) = \frac{1}{\Gamma_\Omega(\alpha)}\int_\Omega \varphi(x) (\det x)^{\alpha} d^*x\ .
\end{equation}

\begin{proposition} For $\varphi$ in $\mathcal S(J)$, $T_\alpha(\varphi)$ admits an analytic continuation as an entire function of  $\alpha$. Moreover, the analytic continuation $T_\alpha$ is a tempered distribution on $J$ for all $\alpha\in \mathbb C$ and satisfies
\begin{equation}
T_\alpha(\varphi\circ \ell^{-1}) = \chi( \ell)^\alpha T_\alpha(\varphi)\ .
\end{equation}
\end{proposition}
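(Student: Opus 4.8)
The plan is to establish absolute convergence on a right half-plane first, and then to propagate holomorphy to all of $\mathbb C$ by means of a functional equation in $\alpha$. For $\Re(\alpha)>(r-1)\frac{d}{2}$ the defining integral \eqref{Riesz} converges absolutely: writing $(\det x)^{\alpha}\,d^*x=(\det x)^{\alpha-n/r}\,dx$, the factor $(\det x)^{\alpha-n/r}$ is integrable near $\partial\Omega$ precisely under this condition (the same one making $\Gamma_\Omega(\alpha)$ converge), while the rapid decay of $\varphi\in\mathcal S(J)$ controls the behaviour at infinity. Differentiation under the integral sign, with estimates uniform on compact subsets of $\{\Re\alpha>(r-1)\frac{d}{2}\}$, shows that $\alpha\mapsto T_\alpha(\varphi)$ is holomorphic there, and continuity in $\varphi$ for the Schwartz topology shows that each $T_\alpha$ is a tempered distribution.

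The heart of the matter is a shift relation. Let $\det(\partial)$ be the constant-coefficient differential operator associated to the determinant polynomial through the inner product $(\cdot\vert\cdot)$. The Bernstein-type identity for symmetric cones asserts that on the interior of $\Omega$
\[\det(\partial)\,(\det x)^{s}=b(s)\,(\det x)^{s-1},\qquad b(s)=\prod_{j=0}^{r-1}\Big(s+j\tfrac{d}{2}\Big).\]
Taking $s=\alpha-\frac{n}{r}$ and using $\frac{n}{r}=1+(r-1)\frac{d}{2}$, one checks that $b(\alpha-\frac{n}{r})$ equals the ratio $\Gamma_\Omega(\alpha)/\Gamma_\Omega(\alpha-1)=\prod_{j=0}^{r-1}(\alpha-1-j\frac{d}{2})$ coming from the product formula for $\Gamma_\Omega$. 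For $\Re(\alpha)$ large the factor $(\det x)^{\alpha-n/r}$ vanishes to high order along $\partial\Omega$, so no boundary terms survive the integration by parts, and the normalization cancels the Bernstein polynomial exactly, yielding the clean relation $\det(\partial)\,T_\alpha=T_{\alpha-1}$, that is $T_\alpha(\varphi)=T_{\alpha+1}\big(\det(\partial)^{t}\varphi\big)$ with $\det(\partial)^{t}=(-1)^{r}\det(\partial)$.

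This identity drives the analytic continuation. Iterating gives $T_\alpha(\varphi)=T_{\alpha+k}\big((\det(\partial)^{t})^{k}\varphi\big)$; for an arbitrary $\alpha\in\mathbb C$ choose $k\in\mathbb N$ with $\Re(\alpha+k)>(r-1)\frac{d}{2}$ and take the right-hand side as the definition of $T_\alpha(\varphi)$. Since $(\det(\partial)^{t})^{k}\varphi$ is a Schwartz function independent of $\alpha$ and depending continuously on $\varphi$, and since $\alpha\mapsto T_{\alpha+k}(\psi)$ is holomorphic for fixed $\psi$, this produces a holomorphic, tempered extension on $\{\Re\alpha>(r-1)\frac{d}{2}-k\}$; the functional equation forces agreement on overlaps, so the pieces patch into a single entire $\mathcal S'(J)$-valued function $\alpha\mapsto T_\alpha$. (Alternatively one may characterize $T_\alpha$ by its Laplace transform $y\mapsto(\det y)^{-\alpha}$ on $\Omega$, which is manifestly entire in $\alpha$; this is the viewpoint used later in the paper.)

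For the covariance, fix $\ell\in L$: its neutral component preserves $\Omega$, the measure $d^*x$ is $L$-invariant, and $\det(\ell y)=\chi(\ell)\det y$ with $\chi(\ell)>0$. For $\Re(\alpha)$ large the substitution $x=\ell y$ in the convergent integral for $T_\alpha(\varphi\circ\ell^{-1})$ gives $\chi(\ell)^{\alpha}T_\alpha(\varphi)$ at once. Both sides are entire in $\alpha$ — the right-hand side because $\chi(\ell)^{\alpha}=e^{\alpha\ln\chi(\ell)}$ is entire and $T_\alpha(\varphi)$ has just been continued — so the identity persists for all $\alpha$ by uniqueness of analytic continuation. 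The one genuinely delicate step is the functional equation of the second paragraph: it depends on the Bernstein identity for the Jordan determinant, and one must verify both that the Bernstein polynomial is the exact reciprocal of the $\Gamma_\Omega$-ratio, so that the normalized shift carries no spurious polynomial factor, and that the boundary contributions in the integration by parts vanish. Everything else reduces to convergence bookkeeping and routine analytic continuation.
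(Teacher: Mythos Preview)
The paper does not supply a proof of this proposition: it is stated as background, the result being classical and treated in full in \cite{fk}, Chapter~VII (the Riesz distributions on a symmetric cone). Your argument is correct and is precisely the standard one given there: absolute convergence and temperedness on the half-plane $\Re\alpha>(r-1)\frac{d}{2}$, then the Bernstein--Sato identity for the Jordan determinant to obtain the normalized shift $\det(\partial)\,T_\alpha=T_{\alpha-1}$, iterated to produce an entire $\mathcal S'(J)$-valued continuation; the covariance under $L$ is immediate by change of variable on the half-plane and propagates by uniqueness of analytic continuation. Your check that the Bernstein polynomial $b(\alpha-n/r)$ equals $\Gamma_\Omega(\alpha)/\Gamma_\Omega(\alpha-1)$, so that the shift carries no extraneous factor, and your remark that for $\Re\alpha$ large the boundary terms in the integration by parts vanish, are the two points that genuinely require attention, and you handle both correctly.
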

The closure $\overline\Omega$ contains $r+1$ orbits under the action of $L$, namely
\[\overline \Omega= \bigsqcup_{k=0}^r\,\Omega^{(k)}\ ,
\]
where $\Omega^{(k)}$ is the set of elements of rank $k$ in $\overline \Omega$. Notice that $\Omega^{(0)}=\{0\}$ and $\Omega^{(r)}= \Omega$.
\begin{proposition}\label{nuk}
For $\alpha = k\frac{d}{2}, 0\leq k\leq r-1$, the distribution $T_\alpha$ is a positive measure (from now on denoted by $\nu_k$) supported by $\overline {\Omega^{(k)}}$. The measure $\nu_k$ is quasi-invariant by $L$ and satisfies
\begin{equation}\label{dnukcov}
\nu_k(\ell \,.\,) = \chi (\ell)^{k\frac{d}{2}}\,\nu_k(\,.\,)\ .
\end{equation}
\end{proposition}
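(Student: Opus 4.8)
The relative invariance (\ref{dnukcov}) needs no separate proof: it is the covariance identity of the preceding Proposition, taken at $\alpha=k\frac d2$ and reread as a transformation law for the measure. Writing $T_{kd/2}(\varphi)=\int_J\varphi\,d\nu_k$, the change of variable $x=\ell y$ turns $T_{kd/2}(\varphi\circ\ell^{-1})=\chi(\ell)^{kd/2}\,T_{kd/2}(\varphi)$ into $\nu_k(\ell\,\cdot\,)=\chi(\ell)^{kd/2}\,\nu_k(\,\cdot\,)$. The genuine content is therefore the two assertions that $T_{kd/2}$ is (i) a positive measure and (ii) supported by $\overline{\Omega^{(k)}}$.

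The plan is to route everything through the Laplace transform, as elsewhere in the paper. For $y\in\Omega$ set $\mathcal L\mu(y)=\int e^{-(x\vert y)}\,d\mu(x)$. A first computation gives, for $\Re\alpha>(r-1)\frac d2$, the identity $\int_\Omega e^{-(x\vert y)}(\det x)^\alpha\,d^*x=\Gamma_\Omega(\alpha)\,(\det y)^{-\alpha}$: substituting $x=P(y^{-1/2})u$ and using the $L$-invariance of $d^*x$, the relation $\chi\big(P(y^{-1/2})\big)=(\det y)^{-1}$ from (\ref{chiP}), and $(P(y^{-1/2})u\vert y)=(u\vert e)=\tr u$, the integral collapses to $\Gamma_\Omega(\alpha)(\det y)^{-\alpha}$. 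Hence $\mathcal L T_\alpha(y)=(\det y)^{-\alpha}$ on $\Omega$, and since $T_\alpha$ is supported by $\overline\Omega$ for every $\alpha$ (a standard feature of the continuation, e.g. via the Bernstein--Sato functional equation for $\det$, which preserves support), this persists by analytic continuation to all $\alpha$, in particular to $\alpha=k\frac d2$. Two anchor cases are now transparent: when $\Re\alpha>(r-1)\frac d2$ the integral (\ref{Riesz}) is visibly a positive measure on the full cone $\overline\Omega=\overline{\Omega^{(r)}}$; and $\alpha=0$ gives $\mathcal L T_0\equiv1$, so $T_0=\delta_0$, a positive measure supported by $\Omega^{(0)}=\{0\}$. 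Every remaining value $\alpha=k\frac d2$ with $1\le k\le r-1$ lies in the divergence region $\Re\alpha\le(r-1)\frac d2$ and is the real object of study.

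For these $k$ I would build the answer by hand and then recognize $T_{kd/2}$ in it. Fix a rank-$k$ idempotent $e_k$ with Peirce decomposition $J=V(e_k,1)\oplus V(e_k,\tfrac12)\oplus V(e_k,0)$; here $V(e_k,1)$ is a Euclidean Jordan algebra of rank $k$ whose cone $\Omega_k$ meets $\overline{\Omega^{(k)}}$ in a slice, and $\Omega^{(k)}=L\cdot\Omega_k$ is a single $L$-orbit. Transporting the natural (Lebesgue) measure of $\Omega_k$ along the $L$-action with the covariant density dictated by (\ref{dnukcov}) produces a positive measure $\mu_k$ on $\overline{\Omega^{(k)}}$ of weight $\chi^{kd/2}$. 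Covariance then forces $\mathcal L\mu_k$ to transform under $L$ like $(\det y)^{-kd/2}$, hence, by transitivity of $L$ on $\Omega$, to be a constant multiple of it; after normalizing $\mu_k$ the constant is $1$. Finally, $T_{kd/2}$ and $\mu_k$ are both tempered measures supported by $\overline\Omega$ with the same Laplace transform $(\det y)^{-kd/2}$; by injectivity of the Laplace transform on distributions supported in $\overline\Omega$ they coincide, so $T_{kd/2}=\mu_k$ is a positive measure supported exactly by $\overline{\Omega^{(k)}}$. The main obstacle is precisely the construction and normalization of $\mu_k$: showing that a relatively invariant positive measure of weight $\chi^{kd/2}$ exists on the orbit --- this is where the Wallach constraint $k\le r-1$ enters, the modular condition failing for other $\alpha$ --- and that the resulting Laplace constant is finite and nonzero. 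In carrying out the slice integral over $V(e_k,1)$ one must track the $\Gamma_\Omega$-normalization with care, since the order of the zero of $1/\Gamma_\Omega$ at $\alpha=k\frac d2$ depends on the parity of $d$; this bookkeeping is the crux of the argument.
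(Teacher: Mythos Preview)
The paper does not supply a proof of this proposition. Both Proposition~1.1 (analytic continuation of $T_\alpha$) and Proposition~\ref{nuk} are stated as background facts, and the reader is implicitly referred to the standing reference \cite{fk} (Faraut--Kor\'anyi, Chapter~VII), where the full argument appears. So there is no ``paper's own proof'' to compare against.

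As a standalone attempt, your outline follows the classical route: compute the Laplace transform $\mathcal L T_\alpha(y)=(\det y)^{-\alpha}$ for large $\Re\alpha$, continue analytically, build a relatively invariant positive measure $\mu_k$ on the orbit $\Omega^{(k)}$ by hand, and identify $T_{kd/2}$ with $\mu_k$ via injectivity of the Laplace transform on $\overline\Omega$-supported tempered distributions. This is exactly the strategy of \cite{fk}. Your reduction of the quasi-invariance \eqref{dnukcov} to the covariance in the preceding proposition is correct and clean.

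What is missing is the actual work. You identify the ``main obstacle'' --- constructing $\mu_k$, checking that it has the right covariance weight, and computing its Laplace transform --- but do not carry it out. In \cite{fk} this is done by an explicit polar-type decomposition of $\Omega^{(k)}$ (via the Peirce subalgebra $J(e_k,1)$ and the nilpotent translations along $J(e_k,\tfrac12)$), and the Laplace integral reduces to a product of lower-rank Gamma integrals; the normalization then matches the residue of $\Gamma_\Omega$. Your remark that the order of the zero of $1/\Gamma_\Omega$ at $k\frac d2$ ``depends on the parity of $d$'' is slightly off: it depends on which of the factors $\Gamma\big(\alpha-(j-1)\tfrac d2\big)$ for $j>k$ land on nonpositive integers, hence on the arithmetic of $d$ and $k$, not merely parity. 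Also, the claim that $T_\alpha$ is supported in $\overline\Omega$ for \emph{all} $\alpha$ cannot simply be read off ``the Bernstein--Sato functional equation''; one needs an argument (again in \cite{fk}) that the continuation stays in the space of $\overline\Omega$-supported distributions. None of this is wrong in spirit, but as written it is a plan, not a proof.
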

Let $c\neq 0$ be an idempotent of $J$. Then the \emph{Peirce decomposition} of $J$ with respect to $c$ is given by
\[J=J(c,1)\oplus J(c,1/2)\oplus J(c,0)\ ,
\]
where for $\alpha=0,1/2, 1$, \[J(c,\alpha) = \{ x\in J,\ cx=\alpha x\}\ .\]
A \emph{Jordan frame} of $J$ is a collection $(c_1,c_2,\dots, c_r)$ of strongly orthogonal  primitive idempotents which satisfy
\[e=c_1+c_2+\dots +c_r\ .
\] 
To each Jordan frame corresponds a \emph{Peirce decomposition}
\[J= \bigoplus_{1\leq i\leq j\leq r}  J_{ij},\]
where for $1\leq i\leq r$ and $i+1\leq j\leq r$, 
 \[J_{ii} = J(c_i,1) = \mathbb R c_i, \qquad J_{ij}= J(c_i, 1/2)\cap J(c_j, 1/2)\ .
\]
The spaces $J_{ij}$ for $1\leq i<j\leq r$ have the same dimension $d$.

Let $\mathbb J$ be the complexification of $J$. Extending  the Jordan algebra structure of $J$  to $\mathbb J$ in a $\mathbb C$-linear way, $\mathbb J$ becomes a complex simple  Jordan algebra. Let $\mathbb L$ be the connected component of the structure group of $\mathbb J$, which is a reductive group. Extend the Euclidean inner product of $J$ to a Hilbertian product on $\mathbb J$ and  let $U(\mathbb J)$ be the corresponding unitary group. Then
\begin{equation}\label{defU}
U=\mathbb L\cap U(\mathbb J)
\end{equation}
is a connected maximal compact subgroup of $\mathbb L$. Notice that $L$ and $U$ are two real forms of $\mathbb L$.

Let
\[T_\Omega = \{ z=x+iy\in \mathbb J, y\in \Omega\}\ 
\]
be the corresponding \emph{tube domain}. Let $G$ be the neutral component of the group of biholomorphic diffeomorphisms of $T_\Omega$. The group $G$ is generated by 
\smallskip

$\bullet$ the group $L$, after complex extension to $\mathbb J$ of its elements
\smallskip

$\bullet$ the group of translations $N=\{t_u : z\mapsto z+u, u\in J\}$ 
\smallskip

$\bullet$ $\iota$ the \emph{inversion} $z\longmapsto -z^{-1}\ .$
\smallskip

The group $G$ acts transitively on $T_\Omega$ and the space $T_\Omega$,  equipped with the Bergman metric, is a non compact Hermitian symmetric space. 

Let $g\in G$ and let $z\in T_\Omega$. Then the complex differential $Dg(z)$ is an  element of the complex structure group $\mathbb L$ of the complex Jordan algebra $\mathbb J$. For the generators of $G$, notice that for $z\in T_\Omega$
\smallskip

$\bullet$ $D\ell (z) = \ell$ for $\ell\in L$.
\smallskip

$\bullet$ $ D t_u (z)= \id$ for $u\in J$
\smallskip

$\bullet$ $D\iota(z) = P\left(z^{-1}\right)=P(z)^{-1}$ for the inversion $\iota$.
\smallskip

For $g\in G$ and $z\in T_\Omega$, let $J(g,z) =Dg(z)$ be the differential of the map $g$ at $z$. An important result is that 
\[J(g,z)\  \in\ \mathbb L\ ,\]
as can verified on the generators of $G$ and extended to $G$ by using the chain rule.

Let $\widetilde U$ be the stabilizer of the origin $ie\in T_\Omega$. Then $\widetilde U$ is a maximal compact subgroup of $G$. To describe this subgroup, it is convenient to refer to the \emph{bounded realization} of $T_\Omega$.  Let $\vert \, .\, \vert $ be the \emph{spectral norm} on $\mathbb J$ and let $D$ be the corresponding open unit ball
\[D = \{ w\in \mathbb J,\quad \vert w\vert <1\}\ .
\]
Let $G(D)^0$ be the neutral component of the group of biholomorphic diffeomorphisms of $D$. Then the stabilizer of $0\in D$ in $G(D)^0$ is equal to $U$ as defined by  \eqref{defU}.

Define the \emph{Cayley transform} to be the rational map $c$ on $\mathbb J$ given by
\[c(w) = i(e+w)(e-w)^{-1}\ .
\]
Then $c$ is well-defined for $w\in D$, its image $c(w)$ belongs to $T_\Omega$ and
$c$ yields a biholomorphic diffeomorphism from $D$ into $T_\Omega$. Now for $u\in U$, define
\begin{equation}\label{Cayley}
\widetilde u = c\circ u \circ c^{-1}\ .
\end{equation}
Then $u\in \widetilde U$ and the map $u\longmapsto \widetilde u$ is a isomorphism of 
$U$ to $\widetilde U$. Notice that
\begin{equation}\label{Jacu}
D\widetilde u (ie) = u\ .
\end{equation}
This is obtained by the chain rule applied to \eqref{Cayley} and the fact that $Dc(0) = 2i\id_{\mathbb J}$.
\section{Vector-valued holomorphic representations and Laplace transform}

Let $(\sigma, V_\sigma)$ be a finite dimensional (holomorphic) irreducible representation of $\mathbb L$ (equivalently of $U$ or $L$) and choose an inner product $(\,.\,,\,.\,)_{V_\sigma}$ on $V_\sigma$ which is invariant under the action of $U$. Let $\mathcal O_\sigma$ be the Montel space of $V_\sigma$-valued holomorphic functions on $T_\Omega$. The following formula defines a representation of $G$ on $\mathcal O_\sigma$
\begin{equation}\label{defpi}
\pi_{\sigma}(g)F(z) = \sigma\big(J(g^{-1},z)\big)^{-1} F\big(g^{-1}(z)\big)\ .
\end{equation}
For $F,G\in \mathcal O_\sigma$ let
\[(F,G)_\sigma = \int_{T_\Omega} \left(\sigma(P(y)^{-1} F(z), G(z)\right)_{V_\sigma} \,d_*z
\]
where $d_*z =( \det y)^{-\frac{2n}{r}} \,dx\,dy$ is the $G$-invariant measure on $T_\Omega$, and let 
\[\mathcal H_\sigma = \{ F\in \mathcal O_\sigma, (F,F)_\sigma  <+\infty\}\ .
\]

When this space is not reduced to $\{0\}$, it is a Hilbert space, stable by the action of $G$ and $\pi_\sigma$ yields a unitary representation of $G$. Then the evaluation map at any $z\in T_\Omega$
\[\mathcal H_\sigma \ni F\longmapsto F(z) \in V_\sigma
\]
is easily shown to be continuous. Define for $z,w\in T_\Omega$ the \emph{reproducing kernel} of $\mathcal H_\sigma$ by
\[\mathcal Q_\sigma(z,w) = E_zE_w^*\quad \ .
\]

\begin{proposition} Assume that $\mathcal H_\sigma \neq \{0\}$. Then 
\[\mathcal Q_\sigma(z,w) = c\,\sigma\left(P\left(\frac{z-\overline w}{2i}\right)\right)
\]for some constant $c>0$.
\end{proposition}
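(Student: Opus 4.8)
The plan is to pin down $\mathcal Q_\sigma$ from its behaviour under $G$ together with Schur's lemma at the base point $ie$, and then to extend from the diagonal to all pairs $(z,w)$ by analytic continuation. First I would record the covariance of the kernel. The evaluation operators $E_z\colon\mathcal H_\sigma\to V_\sigma$ are continuous, and from \eqref{defpi} (using $J(g^{-1},gz)=J(g,z)^{-1}$) one reads off $E_{gz}=\sigma\big(J(g,z)\big)\,E_z\,\pi_\sigma(g)^{-1}$ for $g\in G$. Since $\pi_\sigma$ is unitary, forming $E_{gz}E_{gw}^{*}$ and cancelling the middle factor gives
\[ \mathcal Q_\sigma(gz,gw)=\sigma\big(J(g,z)\big)\,\mathcal Q_\sigma(z,w)\,\sigma\big(J(g,w)\big)^{*}. \]

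Next I would evaluate at $z=w=ie$. The operator $\mathcal Q_\sigma(ie,ie)=E_{ie}E_{ie}^{*}$ is self-adjoint and positive, and it is nonzero: otherwise the displayed relation and the transitivity of $G$ on $T_\Omega$ would force $E_z=0$ for all $z$, i.e.\ $\mathcal H_\sigma=\{0\}$. Applying the covariance to $g=\widetilde u\in\widetilde U$, which fixes $ie$ and satisfies $J(\widetilde u,ie)=D\widetilde u(ie)=u$ by \eqref{Jacu}, and using that $\sigma(u)$ is unitary, shows that $\mathcal Q_\sigma(ie,ie)$ commutes with $\sigma(u)$ for every $u\in U$. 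As $\sigma$ is irreducible as a representation of $U$, Schur's lemma gives $\mathcal Q_\sigma(ie,ie)=c\,\Id$ with $c>0$.

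To compute the kernel on the diagonal I would use transitivity: writing $z=g\cdot ie$, the covariance yields $\mathcal Q_\sigma(z,z)=c\,\sigma\big(Dg(ie)\big)\,\sigma\big(Dg(ie)\big)^{*}$. Because the inner product on $V_\sigma$ is $U$-invariant, the relation $\sigma(\ell)^{*}=\sigma(\ell^{*})$, obvious on $U$, extends by holomorphy to all $\ell\in\mathbb L$, where $\ell^{*}$ denotes the adjoint for the Hilbertian product of $\mathbb J$; hence $\mathcal Q_\sigma(z,z)=c\,\sigma\big(Dg(ie)\,Dg(ie)^{*}\big)$. The Jordan-theoretic heart of the matter is the identity
\[ Dg(z)\,P(\mathrm{Im}\,z)\,Dg(z)^{*}=P\big(\mathrm{Im}(g\cdot z)\big)\qquad(z\in T_\Omega), \]
whose specialisation at $z=ie$ (where $\mathrm{Im}\,ie=e$ and $P(e)=\Id$) reads $Dg(ie)\,Dg(ie)^{*}=P\big(\mathrm{Im}(g\cdot ie)\big)$. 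With $z=x+iy$ this gives $\mathcal Q_\sigma(z,z)=c\,\sigma\big(P(y)\big)$, which is exactly $c\,\sigma\big(P(\tfrac{z-\overline z}{2i})\big)$.

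Finally I would pass from the diagonal to arbitrary $(z,w)$. Both $\mathcal Q_\sigma(z,w)$ and $c\,\sigma\big(P(\tfrac{z-\overline w}{2i})\big)$ are holomorphic in $z$ and antiholomorphic in $w$; the substitution $\zeta=\overline w$ turns each into a map holomorphic in $(z,\zeta)$, and the diagonal $\{(z,\overline z)\}$ is a maximal totally real, hence uniqueness, subset. Since the two maps agree there, they agree for all $(z,w)$, which is the assertion. The main obstacle is the displayed Jordan identity, and specifically its verification on the inversion $\iota$: on $L$ it is immediate from $P(\ell x)=\ell P(x)\ell^{*}$ (for real $\ell$ the Hilbertian adjoint is the transpose), and on $N$ from the invariance of $\mathrm{Im}$ under real translations, whereas for $\iota$, where $Dg(z)=P(z^{-1})$, it amounts to $P\big(\mathrm{Im}(-z^{-1})\big)=P(z^{-1})\,P(\mathrm{Im}\,z)\,P(z^{-1})^{*}$, to be derived from the Jordan inversion identities (see \cite{fk}). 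The subtle point throughout is that the adjoint compatible with $\sigma(\ell)^{*}$ is the Hilbertian one, which differs from the bilinear transpose on the non-real elements $Dg(z)\in\mathbb L$; once the identity is established on generators, the general case follows by the chain rule.
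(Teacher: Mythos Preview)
Your argument is correct and follows the same overall strategy as the paper: derive the covariance of $\mathcal Q_\sigma$ under $G$, apply Schur's lemma at $ie$ via \eqref{Jacu}, compute the kernel on the diagonal, and then extend by holomorphy/anti\-holomorphy. The one noteworthy difference is in how the diagonal value is obtained. You write $z=g\cdot ie$ for an arbitrary $g\in G$ and then appeal to the general identity $Dg(z)\,P(\Im z)\,Dg(z)^{*}=P(\Im gz)$, which you must verify on the generators of $G$, including the inversion $\iota$ (where, as you note, it reduces to a Hua-type formula). The paper is more economical: since already $LN$ acts transitively on $T_\Omega$, one may choose $g=P(y^{1/2})\circ t_u$ with $u=P(y^{-1/2})x$, so that $Dg(ie)=P(y^{1/2})$ is real and symmetric; then $\sigma(P(y^{1/2}))^{*}=\sigma(P(y^{1/2}))$ and $P(y^{1/2})^2=P(y)$ give $\mathcal Q_\sigma(z,z)=c\,\sigma(P(y))$ directly, with no need to touch the inversion at all. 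Your route is valid but imports the Hua identity into a step where the paper avoids it.
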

See \cite{c95} for a proof. Denote by $\Herm^+(V_\sigma)$ the space of  positive semi-definite operators on $V_\sigma$. 

\begin{proposition}\label{dRsigma} Suppose that $\mathcal H_\sigma\neq \{0\}$. There exists a unique $\Herm^+(V_\sigma)$-valued measure $dR_\sigma$ on $\overline \Omega$ such that
\begin{equation}\label{defQsigma}
\mathcal Q_\sigma(z,w) = \int_{\overline{\Omega} }e^{-(\frac{z-\overline w}{2i}\,\vert\,v)} dR_\sigma(v)\ .
\end{equation}
Moreover, the measure $dR_\sigma$ satisfies
\begin{equation}\label{invR}
\forall \ell \in L,\qquad dR_\sigma(\ell\, .\,) ={ \sigma(\ell)^*}^{-1}\, dR_\sigma(\, .\,)\, \sigma(\ell)^{-1}\ ,
\end{equation}
\begin{equation}\label{normR}
\int_{\overline \Omega} e^{-\tr v} dR_\sigma(v) = \Id_{V_\sigma}\ .
\end{equation}
\end{proposition}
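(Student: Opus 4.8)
The plan is to read \eqref{defQsigma} as a Paley--Wiener/Bochner representation of the reproducing kernel, exploiting that $\mathcal Q_\sigma(z,w)$ depends only on $\zeta=\frac{z-\overline w}{2i}$, which ranges over the tube $\Omega+iJ$ (its real part $\frac{\mathrm{Im}\,z+\mathrm{Im}\,w}{2}$ lies in $\Omega$). Since $\mathcal H_\sigma\neq\{0\}$, the kernel is positive-definite in the operator sense: $\sum_{i,j}\big(\mathcal Q_\sigma(z_i,z_j)\xi_j,\xi_i\big)_{V_\sigma}\geq 0$ for all finite families $z_i\in T_\Omega$, $\xi_i\in V_\sigma$. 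Restricting to points $z_i=x_i+iy$ with a common imaginary part $y\in\Omega$, one has $\frac{z_i-\overline{z_j}}{2i}=y-i\xi$ with $\xi=\frac{x_i-x_j}{2}$, so this positivity says exactly that, for each fixed $y\in\Omega$, the map
\[\xi\longmapsto \sigma\big(P(y-i\xi)\big)\]
is a continuous $\End(V_\sigma)$-valued function of positive type on the abelian group $(J,+)\cong\mathbb R^n$. On the diagonal $\xi=0$ its value $\sigma(P(y))$ is positive, which is what will make the representing measure take values in $\Herm^+(V_\sigma)$.

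By the operator-valued Bochner theorem there is, for each $y\in\Omega$, a finite $\Herm^+(V_\sigma)$-valued measure $d\rho_y$ on $\widehat J\cong J$ with $\sigma(P(y-i\xi))=\int_J e^{i(\xi\vert v)}\,d\rho_y(v)$. The analytic input of the proof is that $\xi\mapsto\sigma(P(y-i\xi))$ extends holomorphically to all $\xi\in\mathbb J$ with $\mathrm{Im}\,\xi\in\Omega$ (since then $y-i\xi$ still has real part $y+\mathrm{Im}\,\xi\in\Omega$, hence lies in $\Omega+iJ$). A holomorphically extendable Bochner transform has its representing measure supported in the dual cone, so $\supp d\rho_y\subseteq\overline\Omega$ by self-duality of $\Omega$; reading off the boundary values along $\mathrm{Im}\,\xi=s\in\Omega$ gives $\sigma\big(P((y+s)-i\xi)\big)=\int_J e^{i(\xi\vert v)}e^{-(s\vert v)}\,d\rho_y(v)$, whence the factorization $d\rho_{y+s}(v)=e^{-(s\vert v)}\,d\rho_y(v)$ by Bochner uniqueness. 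Consequently $dR_\sigma(v):=e^{(y\vert v)}\,d\rho_y(v)$ is a single $\Herm^+(V_\sigma)$-valued measure on $\overline\Omega$, independent of $y$, and recombining $(y\vert v)$ with $i(\xi\vert v)$ into $(\zeta\vert v)$ yields \eqref{defQsigma}. Uniqueness of $dR_\sigma$ is clear, since a $\Herm^+(V_\sigma)$-valued measure on $\overline\Omega$ is determined by its Laplace transform (injectivity of the Fourier--Bochner transform).

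The covariance \eqref{invR} is then formal. From the fundamental identity $P(\ell\zeta)=\ell\,P(\zeta)\,\ell^{*}$ for $\ell$ in the structure group, together with $\sigma(\ell^{*})=\sigma(\ell)^{*}$ — valid on $L$ because the inner product on $V_\sigma$ is $U$-invariant and $L,U$ are the two real forms of $\mathbb L$ — one obtains $\sigma(P(\ell\zeta))=\sigma(\ell)\,\sigma(P(\zeta))\,\sigma(\ell)^{*}$. Inserting the Laplace representation on both sides, changing variables $v\mapsto \ell^{*}v$ in the integral and invoking the uniqueness just established produces \eqref{invR}. Finally the normalization \eqref{normR} is obtained by specializing $z=w=ie$: then $\zeta=\frac{ie-\overline{ie}}{2i}=e$ and $P(e)=\Id$, so $\int_{\overline\Omega}e^{-\tr v}\,dR_\sigma(v)=\sigma(P(e))=\Id_{V_\sigma}$, the constant $c$ being fixed to $1$ by this very normalization.

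The routine ingredients here are Bochner's theorem, the transformation rule for $P$, and the evaluation at $\zeta=e$. The one genuine obstacle is the analytic step of the second paragraph: justifying the holomorphic extension in the complex directions $\mathrm{Im}\,\xi\in\Omega$ and thereby simultaneously localizing the spectrum in $\overline\Omega$ and factoring the $y$-dependence as $e^{-(y\vert v)}$. This is precisely a Paley--Wiener phenomenon, and it is what turns the family of Bochner measures on the slices into the single cone measure $dR_\sigma$.
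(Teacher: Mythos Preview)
The paper does not actually prove this proposition: immediately after the statement it writes ``For a proof see \cite{c95}.'' So there is no in-paper argument to compare your proposal against.

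That said, your outline is the standard route and is essentially correct. Positive-definiteness of the reproducing kernel on horizontal slices $\{z=x+iy:x\in J\}$ gives, for each $y\in\Omega$, an $\End(V_\sigma)$-valued function of positive type on $J$, and the operator-valued Bochner theorem produces a finite $\Herm^+(V_\sigma)$-valued measure $d\rho_y$. The holomorphic extension of $\xi\mapsto\sigma(P(y-i\xi))$ to the tube $\{\Im\xi\in\Omega\}$ forces $\supp d\rho_y\subset\overline\Omega$ (self-duality of $\Omega$) and yields the consistency relation $d\rho_{y+s}=e^{-(s\vert\,\cdot\,)}d\rho_y$; this is precisely the Paley--Wiener step you flag as the genuine obstacle, and it does require a growth/boundedness input (here supplied by positive-definiteness, which bounds the function by its value at $0$). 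The covariance identity $\sigma(\ell^t)=\sigma(\ell)^*$ for $\ell\in L$ that you invoke is indeed valid: $K\subset U$ acts unitarily on $V_\sigma$, and the symmetric part $\exp\mathfrak p\subset L$ acts by positive self-adjoint operators because $i\mathfrak p$ lies in $\Lie U$. Two small clean-ups: you reuse the symbol $\xi$ for both vectors in $V_\sigma$ and for the real translation variable in $J$; and the sentence about ``the constant $c$ being fixed to $1$'' should be phrased as a normalization of $\mathcal Q_\sigma$ (or of the inner product on $\mathcal H_\sigma$), since Proposition~2.1 only gives $\mathcal Q_\sigma=c\,\sigma(P(\cdot))$ and \eqref{normR} is exactly the condition $c=1$.
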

For a proof see \cite{c95}.

Let $\mathcal L_\sigma$ be the space of mesurable functions $f:\overline\Omega 
\longrightarrow V_\sigma$ which satisfy
\begin{equation}\label{defLsigma}
 \int_{\overline \Omega} \big(dR_\sigma(2v) f(v),f(v)\big)_{V_\sigma} <+\infty\ .
\end{equation}
After identifying two functions which 	are equal $dR_\sigma$-a.e., $\mathcal L_\sigma$ becomes a Hilbert space for the inner product
\begin{equation}
(f,g)_{\mathcal L_\sigma} = \int_{\overline \Omega} \big(dR_\sigma(2v)f(v),g(v)\big)_{V_\sigma}\ .
\end{equation}
Define the (modified) \emph{Laplace transform} $\mathcal F_\sigma$ by
\begin{equation}\label{defLaplace}
\mathcal F_\sigma f(z) = \int_{\overline \Omega} e^{i\,(z\,\vert\, v)} dR_\sigma(2v) f(v)\ .
\end{equation}
\begin{proposition}
Assume that $\mathcal H_\sigma\neq \{0\}$. Then the Laplace transform $\mathcal F_\sigma$ yields an isometry from $\mathcal L_\sigma$ onto $\mathcal H_\sigma$.
\end{proposition}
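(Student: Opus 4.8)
The plan is to reduce the statement to the behaviour of $\mathcal F_\sigma$ on the family of elementary exponential functions
\[f_{w,\xi}(v) = e^{-i(\overline w\,\vert\, v)}\,\xi\ ,\qquad w\in T_\Omega,\ \xi\in V_\sigma\ ,\]
and to use that $\mathcal H_\sigma$ is a reproducing kernel Hilbert space with kernel \eqref{defQsigma}. First I would verify that each $f_{w,\xi}$ belongs to $\mathcal L_\sigma$: writing $w=x+iy$ with $y\in\Omega$, one has $\vert e^{-i(\overline w\vert v)}\vert^2 = e^{-2(y\vert v)}$, so that after the change of variable $u=2v$ the defining condition \eqref{defLsigma} gives
\[\Vert f_{w,\xi}\Vert_{\mathcal L_\sigma}^2 = \int_{\overline\Omega} e^{-(y\vert u)}\,(dR_\sigma(u)\xi,\xi)_{V_\sigma} = \big(\mathcal Q_\sigma(w,w)\xi,\xi\big)_{V_\sigma}<+\infty\ ,\]
the kernel value being a bounded positive operator. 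The same Cauchy--Schwarz bound, applied to the positive operator-valued measure $dR_\sigma$, shows that the integral \eqref{defLaplace} converges for every $f\in\mathcal L_\sigma$ and that
\[\big(\mathcal F_\sigma f(z),\xi\big)_{V_\sigma} = (f,f_{z,\xi})_{\mathcal L_\sigma}\ .\]
This identity is the backbone of the argument: it exhibits $\mathcal F_\sigma f(z)$ as a continuous functional of $f$, and differentiating under the integral sign (legitimate thanks to the decay of $e^{i(z\vert v)}$ on $T_\Omega$) shows that $\mathcal F_\sigma f\in\mathcal O_\sigma$.

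Next I would compute the action of $\mathcal F_\sigma$ on the elementary vectors. Substituting $f_{w,\xi}$ into \eqref{defLaplace} and grouping exponentials yields $\mathcal F_\sigma f_{w,\xi} = \mathcal Q_\sigma(\,\cdot\,,w)\xi$, so the range of $\mathcal F_\sigma$ contains every reproducing kernel vector. Combining this with the backbone identity and the reproducing property $(F,\mathcal Q_\sigma(\,\cdot\,,w)\xi)_\sigma = (F(w),\xi)_{V_\sigma}$ gives
\[(f_{w,\xi},f_{w',\xi'})_{\mathcal L_\sigma} = \big(\mathcal Q_\sigma(w',w)\xi,\xi'\big)_{V_\sigma} = \big(\mathcal Q_\sigma(\,\cdot\,,w)\xi,\mathcal Q_\sigma(\,\cdot\,,w')\xi'\big)_\sigma\ ,\]
so $\mathcal F_\sigma$ preserves inner products on the linear span of the $f_{w,\xi}$ and maps it isometrically onto the span of the kernel vectors. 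The latter is dense in $\mathcal H_\sigma$ (a vector orthogonal to all $\mathcal Q_\sigma(\,\cdot\,,w)\xi$ vanishes everywhere by the reproducing property), and an isometric image of a complete space is closed; hence, once the domain span is shown to be dense, $\mathcal F_\sigma$ will be an isometry of $\mathcal L_\sigma$ onto $\mathcal H_\sigma$.

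The step I expect to be the main obstacle is the density of $\operatorname{span}\{f_{w,\xi}\}$ in $\mathcal L_\sigma$, equivalently the injectivity of $\mathcal F_\sigma$. By the backbone identity, $g\in\mathcal L_\sigma$ is orthogonal to every $f_{w,\xi}$ precisely when $\mathcal F_\sigma g\equiv 0$; fixing $\xi$ and setting $z=iy$, this says that the complex measure $d\rho_\xi(v)=(dR_\sigma(2v)g(v),\xi)_{V_\sigma}$ on $\overline\Omega$ has Laplace transform $\int_{\overline\Omega} e^{-(y\vert v)}\,d\rho_\xi(v)=0$ for all $y\in\Omega$. The uniqueness theorem for the Laplace transform then forces $\rho_\xi=0$ for every $\xi$, whence $dR_\sigma(2v)g(v)=0$ and therefore $\Vert g\Vert_{\mathcal L_\sigma}=0$. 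The delicate point is to set this up rigorously: one must check that the measure $e^{-(y_0\vert v)}\,d\rho_\xi$ (for a fixed $y_0\in\Omega$) has finite total variation, which follows from the Cauchy--Schwarz inequality for $dR_\sigma$ together with $\Vert g\Vert_{\mathcal L_\sigma}<\infty$ and the finiteness of $\mathcal Q_\sigma(iy_0,iy_0)$, so that the classical uniqueness theorem applies. With injectivity in hand, the pieces assembled above yield that $\mathcal F_\sigma$ is an isometry from $\mathcal L_\sigma$ onto $\mathcal H_\sigma$.
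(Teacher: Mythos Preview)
Your argument is correct and follows the natural reproducing-kernel route: identify the elementary exponentials $f_{w,\xi}$ as preimages of the kernel sections $\mathcal Q_\sigma(\cdot,w)\xi$, verify isometry on their span via \eqref{defQsigma}, and obtain injectivity from uniqueness of the Laplace transform on $\overline\Omega$. The paper itself does not supply a proof but refers to \cite{c95}, Theorem~3.5; the argument there proceeds along the same lines, so your proposal matches the intended approach.
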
 

For the proof, see  \cite{c95}, Theorem 3.5.

The results presented so far can be summarized as follows : to each representation $\sigma$ such that $\mathcal H_\sigma$ is not reduced to $\{0\}$, there corresponds an $\Herm^+(V_\sigma)$-valued measure  $dR_\sigma $ on $\overline \Omega$ which satisfies \eqref{invR} and \eqref{normR}. In some sense, there is a converse construction, starting form the measure $dR_\sigma$ and constructing the Hilbert space $\mathcal H_\sigma$.  

\begin{proposition} Let $\sigma$ be a holomorphic irreducible representation of $\mathbb L$ and assume that there exists  an $\Herm^+(V_\sigma)$-valued measure  $dR_\sigma $ on $\overline \Omega$ which satisfies \eqref{invR} and \eqref{normR}. Define the space $\mathcal L_\sigma$ by the condition \eqref{defLsigma} and let $\widetilde {\mathcal H}_\sigma$ be the image of $\mathcal L_\sigma$ by the Laplace transform \eqref{defLaplace}. Then $\widetilde {\mathcal H}_\sigma$, equipped with the inner product given by
\[(\mathcal F_\sigma f, \mathcal F_\sigma g)  = (f,g)_{\mathcal L_\sigma}
\]
 is a Hilbert space of holomorphic functions in $T_\Omega$, which admits the reproducing kernel given by
\[\mathcal Q_\sigma(z,w) = \int_{\overline \Omega} e^{-(\frac{z-\overline w}{2i}\,\vert \, v)} dR_\sigma(v)\ .
\]
The space $\widetilde {\mathcal H}_\sigma$ is stable under the action of $G$ given by \eqref{defpi} and $(\pi_\sigma,\widetilde{\mathcal H}_\sigma)$ is an irreducible unitary representation of $G$.
\end{proposition}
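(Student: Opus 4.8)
The plan is to show that $\mathcal F_\sigma$ is a well-defined injective isometry from $\mathcal L_\sigma$ onto a reproducing-kernel Hilbert space of holomorphic functions whose kernel is exactly the claimed $\mathcal Q_\sigma$, and then to deduce the $G$-invariance and unitarity from a covariance property of that kernel. First I would check that for $f\in\mathcal L_\sigma$ and $z=x+iy\in T_\Omega$ the integral \eqref{defLaplace} converges and is holomorphic in $z$: writing $e^{i(z\vert v)}=e^{-(y\vert v)}e^{i(x\vert v)}$, the factor $e^{-(y\vert v)}$ furnishes the decay on $\overline\Omega$, and differentiation under the integral sign is justified uniformly on compact subsets of $T_\Omega$. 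The key elementary computation, using that $dR_\sigma(2v)$ is positive and Hermitian, is the identity $(\mathcal F_\sigma f(z),\xi)_{V_\sigma}=(f,g_{z,\xi})_{\mathcal L_\sigma}$ for every $\xi\in V_\sigma$, where $g_{z,\xi}(v)=e^{-i(\overline z\vert v)}\xi$. Since $\Vert g_{z,\xi}\Vert_{\mathcal L_\sigma}^2=\int_{\overline\Omega}e^{-2(y\vert v)}\,(dR_\sigma(2v)\xi,\xi)_{V_\sigma}$, the membership $g_{z,\xi}\in\mathcal L_\sigma$ — hence both the convergence and the continuity of the evaluation $F\mapsto F(z)$ — reduces to the convergence of $\Phi(y):=\int_{\overline\Omega}e^{-(y\vert v)}\,dR_\sigma(v)$ for $y\in\Omega$, which in turn follows from \eqref{normR} together with the bound $(y\vert v)\ge\epsilon\,\tr v$ valid for $y\ge\epsilon e$ and the action of the dilations $\lambda\,\mathrm{id}\in L$.

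Next I would prove that $\mathcal F_\sigma$ is injective, so that the inner product $(\mathcal F_\sigma f,\mathcal F_\sigma g)=(f,g)_{\mathcal L_\sigma}$ is well defined and makes $\widetilde{\mathcal H}_\sigma$ a Hilbert space isometric to $\mathcal L_\sigma$, in particular complete. If $\mathcal F_\sigma f=0$ then $(f,g_{z,\xi})_{\mathcal L_\sigma}=0$ for all $z,\xi$; fixing $y$ and letting $x$ vary, the function $v\mapsto e^{-(y\vert v)}(dR_\sigma(2v)f(v),\xi)_{V_\sigma}$ has vanishing Fourier transform and hence vanishes almost everywhere, which forces $f=0$ in $\mathcal L_\sigma$. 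The reproducing kernel is then read off from the first paragraph: the element representing $F\mapsto(F(z),\xi)_{V_\sigma}$ is $\mathcal F_\sigma g_{z,\xi}$, and a direct computation with the substitution $u=2v$ gives $\mathcal F_\sigma g_{z,\xi}(w)=\int_{\overline\Omega}e^{-(\frac{w-\overline z}{2i}\vert u)}\,dR_\sigma(u)\,\xi$, i.e. the claimed formula for $\mathcal Q_\sigma$.

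The core of the argument is the $G$-invariance, which I would reduce to the kernel covariance $\mathcal Q_\sigma(gz,gw)=\sigma(J(g,z))\,\mathcal Q_\sigma(z,w)\,\sigma(J(g,w))^*$ — the standard condition for $\pi_\sigma$ in \eqref{defpi} to act unitarily on a reproducing-kernel space — and which, thanks to the cocycle relation $J(gh,z)=J(g,hz)J(h,z)$, need only be checked on the generators $L$, $N$, $\iota$ of $G$. The first move is to make the kernel explicit: the $L$-covariance $\Phi(\ell\zeta)=\sigma(\ell)\Phi(\zeta)\sigma(\ell)^*$, read off from \eqref{invR} (using that $L$ is stable under the trace-form adjoint and that $\sigma(\ell^*)=\sigma(\ell)^*$), combined with $\Phi(e)=\Id$ from \eqref{normR}, the transitivity of $L$ on $\Omega$ and analytic continuation in $\zeta$, yields $\Phi(\zeta)=\sigma(P(\zeta))$ and hence $\mathcal Q_\sigma(z,w)=\sigma\big(P(\frac{z-\overline w}{2i})\big)$. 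With this explicit form the covariance for translations is the invariance of $\frac{z-\overline w}{2i}$ under $z,w\mapsto z+u,w+u$, and for $\ell\in L$ it is the structure-group identity $P(\ell x)=\ell P(x)\ell^*$. The inversion $\iota$, for which $J(\iota,z)=P(z^{-1})$, is the main obstacle: after extracting the degree-two homogeneity factor of $P$, the required relation reduces to the Jordan identity $P(a^{-1}-b^{-1})=P(a^{-1})\,P(b-a)\,P(b^{-1})$ evaluated at $a=z$, $b=\overline w$. I regard this passage — from the mere $L$-covariance encoded in $dR_\sigma$ to the full $G$-covariance, through the explicit form $\sigma\circ P$ and the inversion identity — as the technical heart of the proof.

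Finally, irreducibility. Since $\mathcal Q_\sigma(ie,ie)=\sigma(P(e))=\Id$, the evaluation $E_{ie}$ is surjective onto $V_\sigma$, so the minimal $\widetilde U$-type of $\widetilde{\mathcal H}_\sigma$ is the full $V_\sigma$, on which $\widetilde U\cong U$ acts through the irreducible representation $\sigma$. As $\widetilde{\mathcal H}_\sigma$ consists of holomorphic functions this minimal type is cyclic; so if $M$ is a nonzero closed $G$-invariant subspace then, $M$ and $M^\perp$ both being invariant, $V_\sigma=(V_\sigma\cap M)\oplus(V_\sigma\cap M^\perp)$, and irreducibility of $\sigma$ makes one summand equal to $V_\sigma$. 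Were it contained in $M^\perp$, cyclicity would give $M^\perp=\widetilde{\mathcal H}_\sigma$ and $M=\{0\}$, so in fact $V_\sigma\subset M$ and $M=\widetilde{\mathcal H}_\sigma$, proving irreducibility. I expect the analytic steps of the first two paragraphs to be routine adaptations of the Laplace-transform formalism already used for the forward direction, the real content being the promotion of $L$-covariance to $G$-covariance described above.
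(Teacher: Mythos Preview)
The paper does not prove this proposition; it simply refers to \cite{c95}, Theorem~3.5. Your sketch is essentially the standard argument and almost certainly what is carried out there: realise $\widetilde{\mathcal H}_\sigma$ as a reproducing-kernel Hilbert space via the Laplace transform, identify the kernel explicitly as $\sigma\!\big(P(\tfrac{z-\overline w}{2i})\big)$ from the $L$-covariance \eqref{invR} together with the normalisation \eqref{normR} and analytic continuation, and then check the $G$-covariance of that kernel on the generators $L$, $N$, $\iota$, the last case reducing to the Hua identity $P(a^{-1}-b^{-1})=P(a)^{-1}P(a-b)P(b)^{-1}$. Your treatment of convergence, holomorphy and injectivity of $\mathcal F_\sigma$ is correct.

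There is one genuine gap, in the irreducibility paragraph. The decomposition $V_\sigma=(V_\sigma\cap M)\oplus(V_\sigma\cap M^\perp)$ (with $V_\sigma$ identified with the range of $E_{ie}^*$) is not automatic: it requires the orthogonal projection $P_M$ to preserve $E_{ie}^*V_\sigma$, i.e.\ that $\sigma$ occur with multiplicity one as a $\widetilde U$-type, which you have not shown. A cleaner route avoids this. The reproducing kernel $K_M(z,w)=E_zP_ME_w^*$ of a closed $G$-invariant $M$ satisfies the same covariance as $\mathcal Q_\sigma$ (since $P_M$ commutes with $\pi_\sigma(G)$), so $K_M(ie,ie)$ commutes with $\sigma(U)$ and equals $c\,\Id_{V_\sigma}$ by Schur; transitivity of $G$ on $T_\Omega$ plus holomorphy then force $K_M=c\,\mathcal Q_\sigma$. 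For $F\in M$ the two reproducing identities give
\[
(F,\mathcal Q_\sigma(\cdot,w)\xi)=(F(w),\xi)_{V_\sigma}=(F,K_M(\cdot,w)\xi)=c\,(F,\mathcal Q_\sigma(\cdot,w)\xi),
\]
so $(1-c)F=0$; hence $c=1$ or $M=\{0\}$, and by symmetry $M=\{0\}$ or $M^\perp=\{0\}$.
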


For the proof see again \cite{c95}, Theorem 3.5.

\noindent
{\bf Remark.} For $z, w\in T_\Omega$,
\[\Re\left(\frac{z-\overline w}{2i}\right) = \frac{\Im(z)+\Im(w)}{2}\in \Omega,
\]
and on the diagonal
\[\Re\left(\frac{z-\overline z}{2i}\right) = \Im(z)\in \Omega\ .
\]
Hence $\det\left(\frac{z-\overline w}{2i}\right)\neq 0$ on $T_\Omega\times T_\Omega$,
and there exists a unique determination of $\displaystyle \log \det\left(\frac{z-\overline w}{2i}\right)$ on $T_\Omega\times T_\Omega$ which coincides on the diagonal $\{ w=z\}$ with
$\ln \left(\det (\Im z)\right)$. With this choice, define for $\mu\in \mathbb C$, 
\[\det\left(\frac{z-\overline w}{2i}\right)^\mu = e^{\mu \log\left(\det\left(\frac{z-\overline w}{2i}\right)\right)}\ ,
\]
and let
\[\sigma_\mu(\ell) = \chi(\ell)^{-\frac{\mu}{2} }\sigma(\ell)\ .
\]
This makes sense for $\ell\in L$ and defines a representation of $L$, which can also be considered as a representation of the universal covering of $\mathbb L$.
It may be used to define a holomorphic representation $\pi_{\sigma_\mu}$ of the universal covering of $G$ still using formula \eqref{defpi}. Thanks to \eqref{chiP}, the corresponding reproducing kernel is given by
\[\mathcal Q_{\sigma, \mu}(z, w)= \det\left(\frac{z-\overline w}{2i} \right)^{-\mu}\mathcal Q_\sigma(z,w)\ .
\]
This opens the possibility of studying the existence of  $dR_\sigma$ by using techniques of \emph{analytic continuation}.

\section{Polynomial representations of $L$ and the associated holomorphic discrete series}
Let $\mathcal P$ be the space of holomorphic polynomials on $\mathbb J$. The group $\mathbb L$ acts naturally on $\mathcal P$ by
\[\pi(\ell)p\,(z) = p(\ell^{-1} z)\ ,
\]
for $p\in \mathcal P$ and $\ell\in \mathbb L$. It may also be regarded as a representation of $U$ or of $L$, and clearly the decomposition of $\mathcal P$ into invariant minimal subspaces is the same for the three different points of view.

Fix a Jordan frame $(c_1,c_2,\dots, c_r)$ of $J$. For $1\leq k\leq r$ let
\[e_k= c_1+c_2+\dots=c_k ,\qquad
J_k = J(e_k,1)\ .
\]
Then $J_k$ is simple Jordan algebra with neutral element $e_k$. Notice that 
for any $k, 0\leq k\leq r$, \[\Omega_k = L\, e_k\ .
\]

Let ${det}_k$ be its determinant and let $p_k$ be the orthogonal projector of $J$ unto $J_k$. Form
\[\Delta_k(x) ={ \det}_k\,(p_k x)
\]
the \emph{$k$-th principal minor}.

A multiindex $\boldsymbol m = (m_1,m_2,\dots, m_r)$ where  $m_j\in \mathbb Z, 1\leq j\leq r$ is said to be \emph{positive} if $m_1\geq m_2\geq \dots \geq m_r\geq 0$. The set of positive multiindices is denoted by $\boldsymbol {\mathcal M}_+$. For $\boldsymbol m\in \boldsymbol {\mathcal M}_+$ let
\[\Delta_{\boldsymbol m}(x) = \Delta_1(x)^{m_1-m_2} \Delta_2(x)^{m_2-m_3} \dots \Delta_r(x)^{m_r}\ .
\]
and let $\mathcal P_{\boldsymbol m}$ be the subspace of $\mathcal P$ generated by  $\{\pi(\ell)\Delta_{\boldsymbol m}, \  \ell \in L\}$. 

\begin{proposition}\label{decompP}
 The subspaces $\mathcal P_{\boldsymbol m}$ are mutually inequivalent irreducible subspaces under the action of $L$ and $\mathcal P$ is the direct orthogonal sum
\[\mathcal P= \bigoplus_{\boldsymbol m\in \boldsymbol {\mathcal M}_+} \mathcal P_{\boldsymbol m}\ .
\]
\end{proposition}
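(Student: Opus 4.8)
The plan is to recast the statement as a classification of highest weight vectors for the reductive group $\mathbb{L}$ (equivalently its compact real form $U$); this is the Jordan-algebraic incarnation of the Hua--Kostant--Schmid decomposition, and I would follow the route of \cite{fk}. Since $\mathbb{L}$ acts by the degree-preserving substitution $p \mapsto p \circ \ell^{-1}$, the grading $\mathcal{P} = \bigoplus_{m\geq 0}\mathcal{P}^{(m)}$ by degree is by finite-dimensional invariant subspaces. Compactness of $U$ makes each $\mathcal{P}^{(m)}$ a finite orthogonal sum of irreducibles, and by the unitary trick this decomposition is the same for $U$, $L$ and $\mathbb{L}$, as already remarked before Proposition~\ref{decompP}. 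Hence $\mathcal{P}$ is completely reducible, and everything reduces to naming the constituents and their multiplicities.

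First I would fix a maximal torus and positive system adapted to the frame $(c_1,\dots,c_r)$: the Peirce blocks $J_{ij}$, $i<j$, furnish the root spaces and single out a Borel subgroup with unipotent radical $N_0$, while the torus rescales the lines $\mathbb{R}c_j$. The decisive input is the transformation law of the principal minors: each $\Delta_k$ is fixed by $N_0$ and is an eigenvector of the torus, with character $\varpi_k=\epsilon_1+\dots+\epsilon_k$ if $\epsilon_j$ denotes the weight by which the torus acts on $c_j$. Consequently $\Delta_{\boldsymbol m}=\prod_{k}\Delta_k^{\,m_k-m_{k+1}}$ (with $m_{r+1}=0$) is again $N_0$-fixed and a weight vector of weight $\sum_j m_j\,\epsilon_j$, i.e. a \emph{highest weight vector}, and the assignment $\boldsymbol m\mapsto\sum_j m_j\epsilon_j$ is injective on $\boldsymbol{\mathcal M}_+$.

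With this in hand the irreducibility and inequivalence are formal. The space $\mathcal{P}_{\boldsymbol m}$, being the linear span of the $L$-orbit of $\Delta_{\boldsymbol m}$, equals the cyclic module generated by the highest weight vector $\Delta_{\boldsymbol m}$; any finite-dimensional module cyclically generated by a highest weight vector of weight $\lambda$ is isomorphic to the irreducible module $V(\lambda)$, being its image under the nonzero homomorphism sending a highest weight generator to $\Delta_{\boldsymbol m}$. Distinct $\boldsymbol m$ give distinct highest weights, so the $\mathcal{P}_{\boldsymbol m}$ are pairwise inequivalent and therefore mutually orthogonal for the $U$-invariant inner product.

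The crux, and the step I expect to cost the real work, is completeness: that no other constituents appear, so that $\mathcal{P}=\bigoplus_{\boldsymbol m}\mathcal{P}_{\boldsymbol m}$. By complete reducibility the multiplicity of an irreducible of highest weight $\lambda$ in $\mathcal{P}$ equals the dimension of the $\lambda$-weight space of $N_0$-invariant polynomials, so it suffices to prove that the algebra of $N_0$-invariants in $\mathcal{P}$ is exactly the polynomial algebra $\mathbb{C}[\Delta_1,\dots,\Delta_r]$; then every highest weight that occurs is some $\sum_j m_j\epsilon_j$, each with multiplicity one. I would deduce this from the geometry of the cone: the triangular group $T=N_0A$ attached to the frame acts simply transitively on $\Omega$, which is Zariski-dense in $\mathbb{J}$, so an $N_0$-invariant polynomial is determined by its restriction to the $A$-orbit of $e$ and there depends only on the minors; the interplay between this restriction being a genuine polynomial and the ordering condition $m_1\geq\dots\geq m_r\geq 0$ is precisely what cuts out the holomorphic semi-invariants from the full family of conical functions $\Delta_{\boldsymbol s}$, $\boldsymbol s\in\mathbb{C}^r$. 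Establishing that this invariant algebra is freely generated by the minors, rather than the surrounding representation-theoretic formalism, is where the substance of the proposition lies.
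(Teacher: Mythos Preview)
The paper does not supply a proof of this proposition at all; it is stated as a known fact, with the surrounding discussion implicitly pointing to \cite{fk} (the Hua--Schmid decomposition appears there as Theorem~XI.2.4). Your outline is precisely the argument one finds in that reference: complete reducibility via the compact form $U$, identification of each $\Delta_{\boldsymbol m}$ as a highest weight vector for the Borel attached to the Jordan frame, and the crucial step that the $N_0$-invariants in $\mathcal P$ form the free polynomial algebra $\mathbb C[\Delta_1,\dots,\Delta_r]$, which pins down both the set of constituents and their multiplicity one. So there is nothing to compare against in the paper itself, and your sketch is faithful to the standard proof the author is citing.
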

Denote by $ \pi_{\boldsymbol m}$ the restriction of $\pi$ to the subspace $\mathcal  P_{\boldsymbol m}$ will be denoted by $\pi_{\boldsymbol m}$.

Let $\boldsymbol m =(m_1,m_2,\dots, m_r)\in \boldsymbol{\mathcal M_+}$ and let $\mathcal O_{\boldsymbol m} = \mathcal O(T_\Omega, \mathcal P_{\boldsymbol m})\simeq \mathcal O(T_\Omega)\otimes \mathcal P_{\boldsymbol m}$ be the space of $\mathcal P_{\boldsymbol m}$-valued holomorphic functions on $T_\Omega$. Then $G$ acts on $\mathcal O_{\boldsymbol m}$ by 
\[\pi_{\boldsymbol m}(g) F(z) =\sigma_{\boldsymbol m} (J(g^{-1},z)\big)^{-1}
F\big(g^{-1}(z)\big)\ .
\]
Form the corresponding Hilbert space $\mathcal H_{\sigma_{\boldsymbol m}}$ and the corresponding representation $\pi_{\sigma_{\boldsymbol m}}$ of $G$ on $\mathcal H_{\sigma_{\boldsymbol m}}$. Let further
\[\mathcal Q_{\boldsymbol m} (z,w) = \sigma_{\boldsymbol m} \left(P\left(\frac{z-\overline w}{2i}\right)\right).
\]
When $\mathcal H_{\boldsymbol m}\neq \{ 0\}$, this is (up to a positive scalar) the reproducing kernel of $\mathcal H_{\boldsymbol m}$, and there exists a $\Herm^+(\mathcal P_{\boldsymbol m})$-valued measure $dR_{\boldsymbol m}$, supported in $\overline\Omega$, such that \eqref{defQsigma} is satisfied. The question we address is to calculate $dR_{\boldsymbol m}$, using  the two properties \eqref{invR} and \eqref{normR} which allow in principle to calculate a formal solution. It remains to test the positivity of the  solution to conclude.

In this case, the remark can be exploited as follows.
Let $\mu\in \mathbb N$, and consider the map $I_\mu : \mathcal P\rightarrow \mathcal P$ given by
\[(I_\mu\,p\,)(z) = (\det z)^\mu  p(z)
\]
and define the representation $\sigma_{\boldsymbol m, \mu}$ of $\mathbb L$ on $\mathcal P_{\boldsymbol m}$ given by
\[\sigma_{\boldsymbol m, \mu}(\ell)\, p = \chi(\ell)^{-\mu}\  (p\circ \ell^{-1})\ .
\]
\begin{proposition} The operator $I_\mu$ maps $\mathcal P_{\boldsymbol m}$ onto $\mathcal P_{{\boldsymbol m}+\mu}$ and intertwines $\sigma_{\boldsymbol m, \mu}$ and $\sigma_{{\boldsymbol m}+\mu}$.
\end{proposition}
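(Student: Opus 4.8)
The plan is to reduce everything to the single generating vector $\Delta_{\boldsymbol m}$, exploiting that $\mathcal P_{\boldsymbol m}$ is by definition the $L$-module it generates. First I would record the elementary identity underlying the statement. Since $e_r=e$, we have $J_r=J(e,1)=J$, the projector $p_r$ is the identity, and the $r$-th principal minor $\Delta_r$ coincides with $\det$. Writing $\boldsymbol m+\mu=(m_1+\mu,\dots,m_r+\mu)$, every consecutive difference $(m_i+\mu)-(m_{i+1}+\mu)=m_i-m_{i+1}$ is unchanged for $i<r$, while the last exponent becomes $m_r+\mu$. Hence from the definition of $\Delta_{\boldsymbol m}$,
\[\Delta_{\boldsymbol m+\mu}=\Delta_{\boldsymbol m}\cdot\Delta_r^{\,\mu}=(\det)^\mu\,\Delta_{\boldsymbol m}=I_\mu\Delta_{\boldsymbol m}\ ,\]
so $I_\mu$ carries the generator of $\mathcal P_{\boldsymbol m}$ to the generator of $\mathcal P_{\boldsymbol m+\mu}$. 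Here $\mu\in\mathbb N$, so no branch of $\det$ is involved and all objects remain genuine polynomials.

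Next I would check the intertwining relation directly on functions. For $p\in\mathcal P_{\boldsymbol m}$, $\ell\in\mathbb L$ and $z\in\mathbb J$, the defining property $\det(\ell^{-1}z)=\chi(\ell)^{-1}\det z$ of $\chi$ gives
\[\big(\sigma_{\boldsymbol m+\mu}(\ell)\,I_\mu p\big)(z)=(I_\mu p)(\ell^{-1}z)=\big(\det(\ell^{-1}z)\big)^\mu p(\ell^{-1}z)=\chi(\ell)^{-\mu}(\det z)^\mu p(\ell^{-1}z)\ ,\]
while on the other side
\[\big(I_\mu\,\sigma_{\boldsymbol m,\mu}(\ell)p\big)(z)=(\det z)^\mu\,\chi(\ell)^{-\mu}\,p(\ell^{-1}z)\ .\]
These agree, so $I_\mu\circ\sigma_{\boldsymbol m,\mu}(\ell)=\sigma_{\boldsymbol m+\mu}(\ell)\circ I_\mu$ for every $\ell$. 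This is precisely the point of the twist $\chi(\ell)^{-\mu}$ built into $\sigma_{\boldsymbol m,\mu}$: it absorbs the character of $\det$ produced when $\ell$ is pulled through $I_\mu$.

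For surjectivity I would argue as follows. The operator $I_\mu$ is injective on $\mathcal P$ because $\mathcal P$ is an integral domain and $(\det)^\mu\neq 0$. Since $\sigma_{\boldsymbol m,\mu}(\ell)=\chi(\ell)^{-\mu}\pi(\ell)$ differs from $\pi|_{\mathcal P_{\boldsymbol m}}$ only by a nonzero scalar character, it has exactly the same invariant subspaces, so $(\mathcal P_{\boldsymbol m},\sigma_{\boldsymbol m,\mu})$ is irreducible by Proposition \ref{decompP}. As $I_\mu$ is a bijective intertwiner onto its image, $I_\mu(\mathcal P_{\boldsymbol m})$ is an irreducible $\sigma_{\boldsymbol m+\mu}$-invariant — equivalently $\pi$-invariant — subspace of $\mathcal P$ containing $I_\mu\Delta_{\boldsymbol m}=\Delta_{\boldsymbol m+\mu}$. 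An irreducible $\pi$-submodule containing the nonzero vector $\Delta_{\boldsymbol m+\mu}$ must equal the $L$-span of $\{\pi(\ell)\Delta_{\boldsymbol m+\mu}\}$, which is by definition $\mathcal P_{\boldsymbol m+\mu}$. Hence $I_\mu(\mathcal P_{\boldsymbol m})=\mathcal P_{\boldsymbol m+\mu}$.

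I expect no serious obstacle: the proposition is in essence the bookkeeping identity $\Delta_{\boldsymbol m+\mu}=(\det)^\mu\Delta_{\boldsymbol m}$ combined with the transformation law of $\det$ under $L$. The only step deserving care is the final identification of the image as \emph{exactly} $\mathcal P_{\boldsymbol m+\mu}$, and not merely an abstractly isomorphic copy; this uses the multiplicity-free orthogonal decomposition of Proposition \ref{decompP}, which guarantees that the unique irreducible submodule through $\Delta_{\boldsymbol m+\mu}$ is unambiguously $\mathcal P_{\boldsymbol m+\mu}$.
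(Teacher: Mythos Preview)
Your proof is correct and follows essentially the same approach as the paper: compute $I_\mu\Delta_{\boldsymbol m}=\Delta_{\boldsymbol m+\mu}$ directly, verify the intertwining identity $I_\mu\circ\sigma_{\boldsymbol m,\mu}(\ell)=\sigma_{\boldsymbol m+\mu}(\ell)\circ I_\mu$ from the transformation law of $\det$, and conclude that the $L$-span of $\Delta_{\boldsymbol m}$ is carried onto the $L$-span of $\Delta_{\boldsymbol m+\mu}$. Your treatment of surjectivity via irreducibility and the multiplicity-free decomposition is a bit more explicit than the paper's, which simply combines the two identities on the generator, but the substance is the same.
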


\begin{proof}
Now first
\[(I_\mu\,\Delta_{\boldsymbol m}) (z) = \Delta_1(x)^{m_1-m_2}\dots \Delta_r^{m_r+\mu}(z)
\]
\[= \Delta_{m_1+\mu,m_2+\mu,\dots, m_{r-1}+\mu, m_r+\mu}(z)= \Delta_{\boldsymbol m+\mu}(z) \in \mathcal P_{\boldsymbol m+\mu}
\]
Next for $p\in \mathcal P_{\boldsymbol m}$
\[I_\mu(\sigma_{\boldsymbol m,\mu}(\ell)p) (z) = \chi(\ell)^{-\mu}p(\ell^{-1} z) \Delta_r^\mu(z) = p(\ell^{-1}z)\Delta_r^\mu(\ell^{-1} z)= (I_\mu p)(\ell^{-1} z)\ .
\]
Combining both formulas,
\[I_\mu(\sigma_{{\boldsymbol m}, \mu}(\ell) \Delta_{\boldsymbol m}) = \Delta_{{\boldsymbol m}+\mu}\circ \ell^{-1} = \sigma_{{\boldsymbol m} +\mu(\ell)} I_\mu \Delta_{\boldsymbol m}\ .
\]
\end{proof}
As observed previously, this can be extended for $\mu$ a complex number, by considering the universal covering of $\mathbb L$.

The strategy is to consider a multiindex $\boldsymbol m$ with last index $m_r=0$ and study the family of kernels $\mathcal Q_{\sigma_{\boldsymbol m+\mu}},\mu\in \mathbb C$ by analytic continuation. 

The scalar case (i.e. $\boldsymbol m=(m,m,\dots,m)$) is treated in \cite{fk} ch. XIII and  the study leads to the so-called \emph{Wallach set} (see \cite{w} and \cite{rv} for original proofs).

For $x\in \Omega$, $\Delta_j(x)>0$. For $\boldsymbol s = (s_1,s_2,\dots,s_r)\in \mathbb C^r$, write
\[\Delta_{\boldsymbol s} (x) = \Delta_1(x)^{s_1-s_2}\Delta_2(x)^{s_2-s_3}\dots \Delta_r(x)^{s_r}\ .
\]
The \emph{Gamma function} of the cone $\Omega$ is defined by
\[\Gamma_\Omega(\boldsymbol s)= \int_\Omega e^{-\tr x} \Delta_{\boldsymbol s}(x) d^*x
\]
The integral is absolutely convergent for $\Re s_j > (j-1)\frac{d}{2}, j=1,2,\dots,r$ and can be meromorphically continued to $\mathbb C^r$. Moreover,
\begin{equation}\Gamma_\Omega(\boldsymbol s)= (2\pi)^{\frac{n-r}{2}}\prod_{j=1}^r \Gamma\left(s_j-(j-1)\frac{d}{2}\right)\ , 
\end{equation}
see \cite{fk} Ch. VII.
\section{The case of $\mathcal P_{1,0,\dots,0}$}

In this section, we achieve the computation of $dR_\mu$ for  the case corresponding to the family  of indices $\boldsymbol m = (1,0,\dots,0) + \mu, \mu\in \mathbb C$. \footnote{The case where $J$ is of rank $1$, i.e. $J=\mathbb R$, is excluded as there are only scalar scalar valued cases.}

The space $\mathcal P_{1,0,\dots,0}$ is the dual space $\mathbb J'$ of complex linear forms on $\mathbb J$. Using the duality (over $\mathbb C$) given by
\[(x,y) \longmapsto \tr xy
\]
it is convenient to identify $\mathbb J'$ with $\mathbb J$. The corresponding representation $\sigma$ of $\mathbb L$ is then given by
\[\sigma(\ell) v ={ \ell^t}^{-1}v\ .
\]
The corresponding $U$-invariant Hilbertian inner product is given by
\[(x\vert y) = \tr x\overline y\ .
\]
The following (folklore) lemma will be useful later.
\begin{lemma}\label{invquadJ}
Let $J$ be a simple Euclidean Jordan algebra of rank $r\geq 2$. Let $Q$ be a Hermitian operator on $\mathbb J$ which is invariant under $K$. Then there exists $\alpha,\beta\in \mathbb R$ such that for any $v, w\in \mathbb J$
\[Qv = \alpha v + \beta (v\vert e)\, e .
\]
\end{lemma}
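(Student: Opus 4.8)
The plan is to describe the commutant of the $K$-action on $\mathbb J$, since the hypothesis asserts precisely that $Q$ is a Hermitian element of that commutant. Because $K$ stabilizes the unit $e$ and preserves the trace form, the real $K$-module $J$ splits orthogonally as $J=\mathbb R e\oplus J_0$ with $J_0=\{x\in J:\tr x=0\}$; extending scalars gives the orthogonal, $K$-stable decomposition $\mathbb J=\mathbb C e\oplus\mathbb J_0$, where $\mathbb J_0=J_0\otimes_{\mathbb R}\mathbb C$. This is orthogonal for the Hermitian product $(v\mid w)=\tr(v\overline w)$, and $K$ acts unitarily on $(\mathbb J,(\cdot\mid\cdot))$: each $k\in K$ fixes the real form, preserves products and preserves the trace, so $(kv\mid kw)=\tr\big((kv)(k\overline w)\big)=\tr(v\overline w)$.

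The key input — and what I expect to be the only genuine obstacle — is that $K$ acts irreducibly on $\mathbb J_0$ as a \emph{complex} representation. This is where $r\geq 2$ is needed (for $r=1$ one has $\mathbb J_0=0$). I would establish it from the classification: for $\Herm(r,\mathbb R)$, $\Herm(r,\mathbb C)$, $\Herm(r,\mathbb H)$ the space $J_0$ is the traceless part, carrying the familiar isotropy representation of $SO(r)$, $SU(r)$, $Sp(r)$; for the Albert algebra it is the $26$-dimensional representation of $F_4$; and for the spin factors $J_0$ is the standard representation of $SO(\dim J_0)$. All of these remain irreducible after complexification, with the single exception of $\Herm(2,\mathbb R)$ (the rank-two, $d=1$ spin factor), where $K\cong SO(2)$ and $\mathbb J_0$ splits into two conjugate lines; in that one case one must supplement the argument below with the remark that a $K$-invariant $Q$ built from real data commutes with conjugation, which again renders it scalar on $\mathbb J_0$. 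A uniform alternative is to quote Faraut--Kor\'anyi to the effect that $\Der(J)=\Lie(K)$ acts irreducibly on $J_0$.

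Granting irreducibility, the conclusion is immediate from Schur's lemma. Since $\mathbb J_0$ carries no nonzero $K$-fixed vector, the two off-diagonal blocks of $Q$ relative to $\mathbb J=\mathbb C e\oplus\mathbb J_0$ are $K$-equivariant maps between the trivial line $\mathbb C e$ and $\mathbb J_0$, hence vanish; so $Q$ is block-diagonal, acting by a scalar $\lambda_1$ on $\mathbb C e$ and, by Schur, by a scalar $\lambda_2=\alpha$ on $\mathbb J_0$. Using $(e\mid e)=\tr e=r$, the orthogonal projection onto $\mathbb C e$ is $v\mapsto\frac1r(v\mid e)\,e$, so
\[
Qv=\lambda_2\,v+\frac{\lambda_1-\lambda_2}{r}\,(v\mid e)\,e=\alpha\,v+\beta\,(v\mid e)\,e,\qquad \beta=\frac{\lambda_1-\lambda_2}{r}.
\]
Finally $Q=Q^*$ forces $\lambda_1,\lambda_2\in\mathbb R$, hence $\alpha,\beta\in\mathbb R$. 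Everything past the irreducibility statement is a routine Schur-and-projection computation.
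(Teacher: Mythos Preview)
Your route via Schur's lemma on $\mathbb J=\mathbb C e\oplus\mathbb J_0$ is different from the paper's. The paper never invokes irreducibility of $\mathbb J_0$ under $K$, nor the classification: it restricts the Hermitian form $q(v,w)=(Qv\mid w)$ to the complexified Cartan $\mathbb A=\bigoplus_j\mathbb C c_j$, observes that the Weyl group $\mathfrak S_r$ sits inside $K$ and acts on $\mathbb A$ with exactly the two irreducible pieces $\mathbb C e$ and $\mathbb A_0$, reads off $\alpha,\beta$ from $q_{\mathbb A}$, and then uses the spectral theorem (every element of $J$ is $K$-conjugate into $\mathbb A\cap J$) to propagate the identity to $J$ and thence to $\mathbb J$. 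Your approach is more conceptual when it applies, but it leans on a case check; the paper's argument is uniform across all simple $J$.

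There is a genuine gap at the exceptional case you yourself flagged, $J=\Herm(2,\mathbb R)$. Your patch---that ``a $K$-invariant $Q$ built from real data commutes with conjugation''---is not supplied by the hypotheses: the lemma assumes only that $Q$ is Hermitian on $\mathbb J$, not that it preserves the real form. In fact the statement as written fails in this case: with $K\cong SO(2)$ acting on $\mathbb J_0\cong\mathbb C^2$, the operator which is $+1$ on one isotropic line $L_+$, $-1$ on the conjugate line $L_-$, and $0$ on $\mathbb C e$, is Hermitian and $K$-invariant but not of the form $\alpha\,\id+\beta(\cdot\mid e)e$. Your ``uniform alternative'' of quoting Faraut--Kor\'anyi for the irreducibility of $\Der(J)$ on $J_0$ only yields \emph{real} irreducibility, which is precisely what fails to upgrade to $\mathbb C$ here, so it does not close the gap either. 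To make your argument go through for $r=2,\ d=1$ you would need an additional hypothesis (e.g.\ that $Q$ commutes with complex conjugation), which the operators arising in the paper's applications do satisfy but which the lemma as stated does not assume.
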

\begin{proof} Let $q(v,w) = (Qv,w)$ be the associated Hermitian form on $\mathbb J$.
Let $(c_1,c_2,\dots, c_r)$ be a Jordan frame of $J$. Let $\mathbb A = \bigoplus_{j=1}^r \mathbb C c_j$ and let $q_{\mathbb A}$ be the restriction of $q$ to $\mathbb A$. Given a permutation $\sigma$ of $\{1,2,\dots,r\}$ there exists an element $k_\sigma$ of $K$ such that $k_\sigma c_j = c_{\sigma(j)}$. Hence $q_\mathbb A$ is invariant under $\mathfrak S_r$. The space $\mathbb A\simeq \mathbb C^r$ decomposes under the action of $\mathfrak S_r$ as
\[\mathbb A = \mathbb C e\oplus \mathbb A_0= \mathbb C e \bigoplus \left\{a=\sum_{j=1}^r a_jc_j,\  \sum_{j=1}^r a_j= 0 \right\}\ ,
\]
and $\mathbb A_0$ is irreducible under the action of $\mathfrak S_r$. Hence the space of $\mathfrak S_r$-invariant Hermitian forms on $\mathbb A$ is of dimension 2. The two  Hermitian forms $\sum_{j=1}^r a_j\overline b_j$ and $(\sum_{j=1}^r a_j)(\sum_{j=1}^r \overline b_j)$ on $\mathbb C^r$ are $\mathfrak S_r$-invariant and they are linearly independent. Hence there exists two real numbers $\alpha,\beta$ such that 
\[q_\mathbb A (\sum_{j=1}^r a_j c_j, \sum_{j=1}^r b_j c_j) = \alpha\left(\sum_{j=1}^r a_j\overline b_j\right) + \beta \left(\sum_{j=1}^r a_j\right)\left(\sum_{j=1}^r \overline b_j\right)\ .
\]
As a consequence, the two Hermitian forms $q$ and $\alpha \tr(x\overline x)+ \beta \tr(x) \tr(\overline x)$ on $\mathbb J$ are invariant by $K$ and coincide on $\mathbb A$. By the spectral theorem for $J$, they coincide on $J$  and hence also on $\mathbb J$. The conclusion follows
\end{proof} 

Recall that  $\sigma_\mu$ is the representation of $L$ given by
\[\sigma_\mu(\ell) = \chi (l)^{-\frac{\mu}{2}} \sigma(\ell)\ .
\]
The measure $dR_\mu$ on $\overline \Omega$ should satisfy
\begin{equation}\label{invR'}
\text{for all } \ell \in L,\qquad dR_\mu(\ell .) = \chi( \ell)^{\mu} \ell\, dR_\mu(.)\, \ell^t
\end{equation}
and
\begin{equation}\label{normR'}
\int_{\overline\Omega} e^{-\tr v} dR_\mu(v) = \Id_\mathbb J\ .
\end{equation}
\begin{theorem}\label{rmuOmega}
 Let $\Re \mu>(r-1)\frac{d}{2}$.
 Let $dR_\mu$ be the $Herm(\mathbb J)$-valued measure on $\Omega$ defined by
\begin{equation}\label{dRmudef}
 dR_\mu(y) =  \frac{1}{\mu(\mu+1)(\mu-\frac{d}{2})\Gamma_\Omega(\mu)}\left(\mu P(y) -\frac{d}{2} p_y\right)(\det y)^\mu d^*y
\end{equation}
where for $y\in J$, $p_y$ is the operator defined by $p_y\,v\, = (v\vert y) y$.

Then
\[\int_\Omega e^{-\tr y} \,dR_\mu(y) =  \Id_\mathbb J\ .
\]
\end{theorem}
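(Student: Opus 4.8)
The plan is to write the left-hand side as $\frac{1}{\mu(\mu+1)(\mu-\frac d2)\Gamma_\Omega(\mu)}\bigl(\mu A-\tfrac d2 B\bigr)$, where $A=\int_\Omega e^{-\tr y}P(y)(\det y)^\mu\,d^*y$ and $B=\int_\Omega e^{-\tr y}p_y(\det y)^\mu\,d^*y$, and to prove directly that $\mu A-\tfrac d2 B=\mu(\mu+1)(\mu-\tfrac d2)\Gamma_\Omega(\mu)\,\Id_{\mathbb J}$. The key structural observation is that both $A$ and $B$ are $K$-invariant: the density $d\rho(y)=e^{-\tr y}(\det y)^\mu\,d^*y$ is invariant under $y\mapsto ky$ for $k\in K$ (since $K$ preserves $\tr$, $\det$ and $d^*y$), while $P(ky)=kP(y)k^{-1}$ and $p_{ky}=kp_yk^{-1}$ because $K\subset O(J)$ consists of Jordan automorphisms. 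As the integrands are Hermitian for $y\in\Omega$, the operators $A$ and $B$ are Hermitian and commute with $K$, so Lemma~\ref{invquadJ} applies and yields $A=\alpha_A\Id+\beta_A\,p_e$ and $B=\alpha_B\Id+\beta_B\,p_e$, with $p_e v=(v\vert e)e$ and real scalars depending on $\mu$.

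Next I would pin down the four scalars by testing against two functionals, evaluation at $e$ and the operator trace. Using $P(y)e=y^2$, $p_ye=(\tr y)\,y$, $\Tr p_y=\tr(y^2)$, and the Peirce computation $\Tr P(y)=(1-\tfrac d2)\tr(y^2)+\tfrac d2(\tr y)^2$ (the eigenvalue of $P(y)$ on $J_{ij}$ being $\lambda_i\lambda_j$ in a spectral frame), everything reduces to the two scalar moments $I_1=\int_\Omega\tr(y^2)\,d\rho$ and $I_2=\int_\Omega(\tr y)^2\,d\rho$ against the normalisation $\Gamma_\Omega(\mu)$: indeed $(Ae\vert e)=I_1$, $(Be\vert e)=I_2$, $\Tr A=(1-\tfrac d2)I_1+\tfrac d2 I_2$, $\Tr B=I_1$. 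Combined with $(Xe\vert e)=r(\alpha+r\beta)$ and $\Tr X=n\alpha+r\beta$ for $X=\alpha\Id+\beta\,p_e$, this lets me solve for $\alpha_A,\beta_A,\alpha_B,\beta_B$ in terms of $I_1,I_2$.

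The computational heart is the two moments. For $I_2$ I would use the dilation $F(t)=\int_\Omega e^{-t\tr y}(\det y)^\mu\,d^*y=t^{-r\mu}\Gamma_\Omega(\mu)$ (substitute $u=ty$, using homogeneity of $\det$ and $L$-invariance of $d^*y$) and read off $I_2=F''(1)=r\mu(r\mu+1)\Gamma_\Omega(\mu)$. The quantity $I_1$ cannot be recovered from $\tr y$ alone, so here I would invoke the matrix Laplace identity $\int_\Omega e^{-(s\vert y)}(\det y)^\mu\,d^*y=\Gamma_\Omega(\mu)(\det s)^{-\mu}$ for $s\in\Omega$, proved by the substitution $y=P(s^{-1/2})z$, for which $P(s^{-1/2})s=e$ gives $(s\vert P(s^{-1/2})z)=\tr z$ and $\chi(P(s^{-1/2}))=(\det s)^{-1}$. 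Differentiating twice along $s=e+tv$ and using $\log\det(e+tv)=t\,\tr v-\tfrac{t^2}2\tr(v^2)+\cdots$ produces $\int_\Omega(v\vert y)^2\,d\rho=\Gamma_\Omega(\mu)\bigl[\mu^2(\tr v)^2+\mu\,\tr(v^2)\bigr]$; polarising this quadratic form and summing over an orthonormal basis $(\epsilon_k)$ of $J$ (so that $\sum_k(\epsilon_k\vert e)^2=r$ and $\sum_k(\epsilon_k\vert\epsilon_k)=n$) gives $I_1=\mu(\mu r+n)\Gamma_\Omega(\mu)$.

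Finally I would substitute $I_1=\mu(\mu r+n)\Gamma_\Omega(\mu)$ and $I_2=\mu r(\mu r+1)\Gamma_\Omega(\mu)$, using $n=r+\tfrac{r(r-1)}2 d$, to obtain $\alpha_A=\mu(\mu+1-\tfrac d2)\Gamma_\Omega(\mu)$, $\beta_A=\tfrac{\mu d}2\Gamma_\Omega(\mu)$, $\alpha_B=\mu\Gamma_\Omega(\mu)$, $\beta_B=\mu^2\Gamma_\Omega(\mu)$. Then the $p_e$-coefficient $\mu\beta_A-\tfrac d2\beta_B$ vanishes identically, while the $\Id$-coefficient $\mu\alpha_A-\tfrac d2\alpha_B$ collapses to $\mu(\mu+1)(\mu-\tfrac d2)\Gamma_\Omega(\mu)$, giving exactly $\mu A-\tfrac d2 B=\mu(\mu+1)(\mu-\tfrac d2)\Gamma_\Omega(\mu)\,\Id_{\mathbb J}$ and hence the claimed normalisation. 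I expect the main obstacle to be the evaluation of $I_1$ through the matrix-parameter Laplace transform, together with the delicate algebra — relying on the relation $n=r+\tfrac{r(r-1)}2 d$ — that makes the $p_e$-term cancel and factors the $\Id$-coefficient into the clean product $\mu(\mu+1)(\mu-\tfrac d2)$.
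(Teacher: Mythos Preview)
Your argument is correct. The structural skeleton is the same as the paper's: both invoke $K$-invariance together with Lemma~\ref{invquadJ} to reduce the operator identity to a two-parameter problem, and both determine the parameters by testing against $\Tr(\cdot)$ and $(\cdot\,e\vert e)$. Where you genuinely diverge is in how the scalar moments are evaluated. The paper passes to the Hua--Schmid basis $p_{2,0},p_{1,1}$ of $K$-invariant quadratics (aligned with the decomposition $\mathcal P_2=\mathcal P_{2,0,\dots,0}\oplus\mathcal P_{1,1,0,\dots,0}$) and uses the spherical-polynomial formula $\int_\Omega e^{-\tr y}p(y)(\det y)^\mu d^*y=\Gamma_\Omega(\boldsymbol m+\mu)\,p(e)$ for $p\in\mathcal P_{\boldsymbol m}$; it then solves a $2\times 2$ system for the unknown ansatz coefficients $\alpha(\mu),\beta(\mu)$. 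You instead work directly with the ``naive'' moments $I_1=\int\tr(y^2)\,d\rho$ and $I_2=\int(\tr y)^2\,d\rho$, obtaining $I_2$ by a one-parameter dilation and $I_1$ by twice differentiating the matrix-parameter Laplace identity $\int_\Omega e^{-(s\vert y)}(\det y)^\mu d^*y=\Gamma_\Omega(\mu)(\det s)^{-\mu}$ and then tracing over an orthonormal basis. This buys you a more self-contained proof that avoids the generalized Gamma function $\Gamma_\Omega(\boldsymbol m+\mu)$ and the $\mathcal P_{\boldsymbol m}$-machinery entirely; the paper's route, on the other hand, makes the connection with the polynomial representation theory explicit and is the one that scales naturally to other $\boldsymbol m$. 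Your closed forms $\alpha_A,\beta_A,\alpha_B,\beta_B$ check out, as does the spectral computation $\Tr P(y)=(1-\tfrac d2)\tr(y^2)+\tfrac d2(\tr y)^2$, and the final cancellation of the $p_e$-term and the factorisation of the $\Id$-coefficient into $\mu(\mu+1)(\mu-\tfrac d2)\Gamma_\Omega(\mu)$ are exactly the same miracles the paper observes (there phrased as the determinant of its $2\times2$ system being $(n-1)(\mu+1)(\mu-\tfrac d2)$).
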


Before giving a proof of the theorem, we present a heuristic approach which led us to the formula. Assume that the measure $dR_\mu$ is absolutely continuous with respect to the $L$-invariant measure $(\det y)^{-\frac{n}{r}} dy$ on $\Omega$, and let
\[dR_\mu(y) = r_\mu(y)(\det y)^{-\frac{n}{r}} dy
\]
be its expression, where $r_\mu$ is now a $\Herm(\mathbb J)^+$-valued function on $\Omega$. The function $r_\mu$ has to satisfy
\begin{equation}\label{covr}
\forall \ell\in L, \forall x\in \Omega,\qquad r_\mu(\ell x) =\chi(l)^\mu\, l\,r_\mu(x)\,l^t
\end{equation}
and
\begin{equation}\label{normr}
\int_\Omega e^{-\tr x} r_\mu (x) \, dx^*= \id_\mathbb J\ .
\end{equation}

\begin{lemma} Let $r_\mu$ a $\Herm(\mathbb J)$-valued function on $\Omega$ which satisfies \eqref{covr}. Then there exist two real numbers  $\alpha(\mu), \beta(\mu)$ such that
\begin{equation}\label{alphabeta}
r_\mu(y) = (\det y)^\mu \big( \alpha(\mu) P(y) +\beta(\mu)\, p_y\big) \ .
\end{equation}

\end{lemma}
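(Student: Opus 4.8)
The plan is to exploit the transitivity of $L$ on $\Omega$ in order to reduce the covariance relation \eqref{covr} to a single algebraic constraint at the base point $e$, and then to invoke Lemma \ref{invquadJ}. First I would recall that $\Omega = L\cdot e$ and that the stabilizer of $e$ in $L$ is exactly $K = L\cap O(J)$. Writing an arbitrary $y\in\Omega$ as $y=\ell e$, relation \eqref{covr} gives $r_\mu(y) = \chi(\ell)^\mu\,\ell\,r_\mu(e)\,\ell^t$, so that $r_\mu$ is entirely determined by the single Hermitian operator $r_\mu(e)$. The whole task then splits into identifying $r_\mu(e)$ and transporting it.

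To identify $r_\mu(e)$, I would specialize \eqref{covr} to $\ell=k\in K$ and $x=e$. Here $ke=e$, and $\chi(k)=1$ since a Jordan algebra automorphism preserves $\det$, while $k^t=k^{-1}$ because $K\subset O(J)$. The relation thus collapses to $r_\mu(e)=k\,r_\mu(e)\,k^{-1}$, i.e. $r_\mu(e)$ is a $K$-invariant Hermitian operator on $\mathbb J$. Lemma \ref{invquadJ} then supplies real numbers $\alpha=\alpha(\mu)$ and $\beta=\beta(\mu)$ with $r_\mu(e)v = \alpha v + \beta(v\vert e)\,e$. Noting that $P(e)=\Id$ and that $p_e v=(v\vert e)\,e$, this is precisely $r_\mu(e)=\alpha P(e)+\beta p_e$.

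Finally I would transport this back to a general $y=\ell e$ through the covariance. The two ingredients are the structure-group identity $P(\ell x)=\ell P(x)\ell^t$, which gives $\ell P(e)\ell^t=P(\ell e)=P(y)$, and the direct computation $\ell p_e\ell^t v=(\ell^t v\vert e)\,\ell e=(v\vert \ell e)\,y=p_y v$, so that $\ell p_e\ell^t=p_y$. Together with $\det(\ell e)=\chi(\ell)\det e=\chi(\ell)$, hence $(\det y)^\mu=\chi(\ell)^\mu$, substitution into $r_\mu(y)=\chi(\ell)^\mu\,\ell\,r_\mu(e)\,\ell^t$ yields exactly \eqref{alphabeta}. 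Note that no well-definedness question arises from the choice of $\ell$, since one is merely computing the value of an already given function rather than defining one.

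I do not expect a genuine obstacle here: the homogeneity reduction to $e$ combined with the folklore Lemma \ref{invquadJ} does essentially all the work. The only point requiring care is the bookkeeping of adjoints, namely checking that the $\ell^t$ appearing in \eqref{covr} is the adjoint for the trace form on $\mathbb J$ and that it is compatible with the structure-group relation $P(gx)=gP(x)g^t$; this is routine but should be stated explicitly so that the identifications $\ell P(e)\ell^t=P(y)$ and $\ell p_e\ell^t=p_y$ are unambiguous.
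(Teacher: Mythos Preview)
Your proof is correct and follows essentially the same approach as the paper: specialize \eqref{covr} at $e$ with $\ell\in K$ to force $K$-invariance of $r_\mu(e)$, apply Lemma~\ref{invquadJ}, and transport via the covariance. The only cosmetic difference is that the paper chooses the specific symmetric element $\ell=P(y^{1/2})$ in the transport step (so that $\ell^t=\ell$ and $\chi(\ell)^\mu=(\det y)^\mu$ via \eqref{chiP}), whereas you work with an arbitrary $\ell$ sending $e$ to $y$ and verify $\ell P(e)\ell^t=P(y)$, $\ell p_e\ell^t=p_y$ directly; both routes are equivalent.
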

\begin{proof}
Letting $\ell \in K$ and $v=e$, \eqref{covr} implies that for any $k\in K$
\[k\,r_\mu(e)= r_\mu(e)\, k\ .
\]
Now use Lemma \ref{invquadJ} to conclude that there exists two real numbers $\alpha(\mu)$ and $\beta(\mu)$ such that
\[r_\mu(e)v = \alpha(\mu) v + \beta(\mu) \tr(v) e\ .
\]
Further, let $y\in \Omega$ and use again \eqref{covr} with $x=e$ and $\ell = P(y^{1/2})$ to get
\[
r_\mu(y) = (\det y)^\mu \big( \alpha(\mu) P(y) +\beta(\mu)\, p_y\big) \ .
\]

\end{proof}
To prove Theorem \ref{rmuOmega}, it remains to compute $\alpha(\mu)$ and $\beta(\mu)$, so that \eqref{normr} is satisfied. Consider the four following functions on $J$
\[\Tr P(x),\quad  \Tr p_x,\quad (P(x)e\vert e),\quad   (p_x(e)\vert e)\ .
\]
They are homogeneous polynomials of degree $2$ and they are invariant by $K$.
The space $\mathcal P_2$ of homogeneous polynomials of degree $2$ decomposes under the action of $L$ as
\[\mathcal P_2 = \mathcal P_{2,0,\dots,0} \oplus \mathcal P_{1,1,0,\dots,0}\ ,\]
as a consequence of Proposition \ref{decompP}.
Now each space $\mathcal P_{\boldsymbol m}$ contains a unique (up to a scalar) $K$-invariant vector (see \cite{fk} Proposition XI.3.1), so that the four polynomials are linear combination of the two $K$-invariant polynomials in $\mathcal P_{2,0}$ and $\mathcal P_{1,1}.$
\begin{lemma}
 Consider the following polynomials
\[p_{2,0}(x) = \frac{d}{2}\, (\tr x)^2 +\tr(x^2)\ ,\quad p_{1,1}(x) = (\tr x)^2 -\tr(x^2)\ 
\]
Then $p_{2,0}$ (resp. $p_{1,1}$) is the unique (up to a scalar) $K$-invariant element in  $\mathcal P_{2,0,\dots,0}$ (resp.  $\mathcal P_{1,1,0,\dots,0}$).
\end{lemma}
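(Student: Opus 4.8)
The plan is to reduce the statement to an easy representation-theoretic skeleton and then pin down two numerical coefficients, both of which turn out to encode the relation $n=r+\frac{r(r-1)}{2}d$.

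First I would record that $p_{2,0}$ and $p_{1,1}$ lie in $\mathcal P_2$ and are $K$-invariant, since $\tr x$ and $\tr(x^2)$ are invariant under $K\subset\Aut(J)$. They are linearly independent for $r\geq 2$: their restrictions to $\mathbb A$, with $x=\sum x_j c_j$, are $\frac d2(\sum x_j)^2+\sum x_j^2$ and $(\sum x_j)^2-\sum x_j^2$. By Proposition \ref{decompP} one has $\mathcal P_2=\mathcal P_{2,0,\dots,0}\oplus\mathcal P_{1,1,0,\dots,0}$, and each summand carries a one-dimensional space of $K$-fixed vectors (Proposition XI.3.1 of \cite{fk}, recalled above). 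Hence the space of $K$-invariant quadratics is two-dimensional, spanned by $(\tr x)^2$ and $\tr(x^2)$, and splits as $(\mathcal P_{2,0,\dots,0})^K\oplus(\mathcal P_{1,1,0,\dots,0})^K$. It therefore suffices to show that $p_{2,0}$ spans the first line and $p_{1,1}$ the second.

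The key computation is for $\mathcal P_{2,0,\dots,0}$, which is $K$-stable and generated by $\Delta_{2,0,\dots,0}=\Delta_1^2$ with $\Delta_1(x)=(x\vert c_1)$. Averaging over $K$ yields the (nonzero) generator of its $K$-invariants, $\phi(x):=\int_K (x\vert kc_1)^2\,dk=(Mx\vert x)$, where $M=\int_K p_{kc_1}\,dk$ is a $K$-invariant symmetric operator. Applying Lemma \ref{invquadJ} to its complexification gives $M=\alpha\,\id+\beta\,p_e$, so $\phi=\alpha\tr(x^2)+\beta(\tr x)^2$. I then determine $\alpha,\beta$ from two scalar identities: $\tr M=\int_K(kc_1\vert kc_1)\,dk=(c_1\vert c_1)=1$ gives $\alpha n+\beta r=1$, while $Me=\int_K \tr(kc_1)\,kc_1\,dk=\int_K kc_1\,dk=\frac1r e$ (the last equality by summing over the frame and using that the $k_\sigma$ permute the $c_j$) gives $\alpha+\beta r=\frac1r$. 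Subtracting, $\alpha(n-1)=\frac{r-1}{r}$, whence $\beta/\alpha=\frac{n-r}{r(r-1)}=\frac d2$ exactly because $n-r=\frac{r(r-1)}2 d$. Thus $\phi$ is a positive multiple of $p_{2,0}$, proving the first claim.

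To finish with $p_{1,1}$ I would argue that a $K$-invariant quadratic lies in $\mathcal P_{1,1,0,\dots,0}$ if and only if its $\mathcal P_{2,0,\dots,0}$-component vanishes; that component is $K$-invariant, hence a multiple of $p_{2,0}$, so by the orthogonality in Proposition \ref{decompP} it is enough to check $p_{1,1}\perp p_{2,0}$ for the Fischer inner product. Writing a quadratic $x\mapsto(Sx\vert x)$ with $S$ symmetric, this inner product is proportional to $\tr(ST)$; here $p_{2,0}\leftrightarrow S=\id+\frac d2 p_e$ and $p_{1,1}\leftrightarrow T=p_e-\id$. Using $p_e^2=r\,p_e$ and $\tr p_e=r$, one gets $\tr(ST)=-n+r\big(1+\frac{(r-1)d}{2}\big)=0$, once more precisely by $n=r+\frac{r(r-1)}2 d$. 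Hence $p_{1,1}\in\mathcal P_{1,1,0,\dots,0}$. The representation-theoretic skeleton is immediate from the quoted facts; the genuine content is the two numerical coincidences $\beta/\alpha=\frac d2$ and $\tr(ST)=0$, which is where the Jordan data enter and both of which collapse exactly by the dimension relation. The main obstacle is therefore keeping the normalizations straight ($\tr c_1=1$, the evaluation $Me=\frac1r e$, and $p_e^2=r\,p_e$) so that this relation delivers the exact coefficient $\frac d2$; alternatively one could identify $(\mathcal P_{1,1,0,\dots,0})^K$ directly by averaging $\Delta_2$ or by recognizing $p_{1,1}=2\sigma_2$ as the second elementary symmetric function of the Jordan spectrum, but the Fischer-orthogonality check is the cheapest route.
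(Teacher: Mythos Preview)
Your argument is correct. The averaging of $(x\vert kc_1)^2$ over $K$, the identification of $M$ via Lemma~\ref{invquadJ}, the two scalar equations $\alpha n+\beta r=1$ and $\alpha+\beta r=\tfrac1r$, and the Fischer-orthogonality check $\tr(ST)=0$ all go through as you wrote them; both numerical identities reduce precisely to $n=r+\tfrac{r(r-1)}{2}d$.

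As for comparison: the paper does not prove this lemma at all---it simply cites \cite{fk}, Ch.~XI, Exercise~2. Your write-up is therefore a genuine self-contained proof where the paper offers none. The approach you take (average the highest-weight vector to produce the spherical vector, then use orthogonality for the complementary piece) is in fact the natural solution to that exercise; the alternative you mention, recognizing $p_{1,1}=2\sigma_2$ as (twice) the second elementary symmetric function of the spectral values and invoking the Faraut--Kor\'anyi description of spherical polynomials, would be an equally clean route for the $(1,1,0,\dots,0)$ piece.
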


\noindent
For a proof, see \cite{fk} ch XI. Exercice 2.

\begin{lemma} \label{Trtr} Let $J$ be a simple Euclidean Jordan algebra of rank $r\geq 2$. Then the following identities hold 
\begin{equation} \label{TrP}
\Tr P(x) =  \frac{2}{d+2}\ \Big(p_{2,0}(x)\,+\,\frac{d^2}{4}\,p_{1,1}(x) \Big)
\end{equation}
\begin{equation}\label{trxsq}
(P(x)e\vert e) = \Tr p_x =  \frac{2}{d+2} \Big(p_{2,0}(x))-\frac{d}{2}\big(p_{1,1}(x)\Big)
\end{equation}
\begin{equation}\label{trx)sq}
(p_xe\vert e)=  \frac{2}{d+2} \big(p_{2,0}(x)+p_{1,1}(x)\big)\ .
\end{equation}
\end{lemma}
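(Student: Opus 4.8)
The plan is to use the observation made just before the lemma: each of the four degree-two polynomials $\Tr P(x)$, $\Tr p_x$, $(P(x)e\vert e)$ and $(p_xe\vert e)$ is $K$-invariant and so lies in the two-dimensional space spanned by $p_{2,0}$ and $p_{1,1}$. Since $p_{2,0}$ and $p_{1,1}$ are themselves linear combinations of $(\tr x)^2$ and $\tr(x^2)$, and since for $r\geq 2$ these last two polynomials are linearly independent (they take the values $(r^2,r)$ at $x=e$ and $(1,1)$ at a primitive idempotent $x=c_1$, and $\det\bigl(\begin{smallmatrix} r^2 & r\\ 1 & 1\end{smallmatrix}\bigr)=r(r-1)\neq 0$), it is equivalent and more convenient to write each of the four polynomials in the form $A\,(\tr x)^2+B\,\tr(x^2)$ and to determine the scalars $A,B$. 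This is precisely where the hypothesis $r\geq 2$ enters: for $r=1$ one has $(\tr x)^2=\tr(x^2)$ and the two test points collapse.

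To pin down $A$ and $B$ I would evaluate each polynomial at $x=e$ and $x=c_1$, producing the system $Ar^2+Br=f(e)$, $A+B=f(c_1)$, which is non-degenerate exactly because $r\geq 2$. The needed values come from elementary Jordan-algebra identities. For three of the functions the computation is immediate: from $P(x)=2L(x)^2-L(x^2)$ and $L(x)e=x$ one gets $P(x)e=x^2$, so $(P(x)e\vert e)=\tr(x^2)$; the operator $p_x$ is the rank-one Hermitian operator $v\mapsto(v\vert x)x$, whence $\Tr p_x=(x\vert x)=\tr(x^2)$ and $(p_xe\vert e)=(\tr x)^2$. The only genuinely interesting case is $\Tr P(x)$, and here the device of testing at $e$ and at a single primitive idempotent avoids the otherwise tedious diagonalization of $L(x)$ on the Peirce components $J_{ij}$: since $P(e)=\id_{\mathbb J}$ we get $\Tr P(e)=\dim_{\mathbb C}\mathbb J=n$, while $P(c_1)$ is the Peirce projection onto $\mathbb C c_1$, so $\Tr P(c_1)=1$; solving yields $A=\frac{n-r}{r(r-1)}=\frac d2$ and $B=1-\frac d2$.

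It then remains only to confirm that these coefficients reproduce the stated right-hand sides. Substituting $p_{2,0}(x)=\frac d2(\tr x)^2+\tr(x^2)$ and $p_{1,1}(x)=(\tr x)^2-\tr(x^2)$ into each claimed expression and collecting the coefficients of $(\tr x)^2$ and $\tr(x^2)$ is a short, routine verification; for example in \eqref{TrP} the factor $\frac{2}{d+2}$ combines with $\frac d2+\frac{d^2}{4}=\frac d2\cdot\frac{d+2}{2}$ to give the coefficient $\frac d2$ of $(\tr x)^2$, and with $1-\frac{d^2}{4}=\frac{(2-d)(2+d)}{4}$ to give the coefficient $1-\frac d2$ of $\tr(x^2)$; the identity \eqref{trxsq} and the last identity follow from the even simpler values $A=0,B=1$ and $A=1,B=0$ respectively. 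The main obstacle is thus concentrated entirely in $\Tr P(x)$, which the two-point evaluation renders essentially mechanical once $P(e)=\id_{\mathbb J}$ and the behaviour of $P$ on a primitive idempotent are recalled.
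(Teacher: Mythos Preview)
Your proposal is correct and follows essentially the same approach as the paper: compute $(P(x)e\vert e)=\tr(x^2)$, $\Tr p_x=\tr(x^2)$ and $(p_xe\vert e)=(\tr x)^2$ directly, and determine $\Tr P(x)$ by evaluating at $x=e$ and at a primitive idempotent $c$, using $\Tr P(e)=n$ and $\Tr P(c)=1$. You have simply spelled out the change-of-basis computation in more detail than the paper does.
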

\begin{proof} First observe that 
\[(P(x)e\vert e) = ( x^2\vert e) = \tr(x^2),\quad  \Tr p_x= (x\vert x)= \tr(x^2),\]\[  ( p_xe\vert e)=(x\vert e) (x\vert e)= (\tr x)^2\ .
\]
Hence \eqref{trxsq} and \eqref{trx)sq} express the change of basis from $\{\tr (x^2), (\tr x)^2\}$ to $\{p_{2,0}, p_{1,1}\}$. For \eqref{TrP}, it suffices to evaluate both sides  on  $x=e$ and on $x=c$ a primitive idempotent ($e$ and $c$ are not proportional, as $\rank J\geq 2$). Now
 \[\tr(e) = \tr(e^2) = r,\quad \Tr P(e) = n,\quad \tr(c) = \tr(c^2)=1,\quad \Tr P(c)=1
 \]
 and the verification follows  by elementary computation. 
 \end{proof}
 \begin{lemma}\label{I20I11}
 For $\Re \mu >(r-1)\frac{d}{2}$,
 \begin{equation}
 I_{2,0}= \int_\Omega e^{-\tr x} p_{2,0}(x)(\det x)^\mu d^*x = r\Big(1+\frac{rd}{2}\Big)\mu(\mu+1)\Gamma_\Omega(\mu)
 \end{equation}
 \begin{equation}
 I_{1,1}= \int_\Omega e^{-\tr x} p_{1,1}(x)(\det x)^\mu d^*x =r(r-1)\,\mu\,(\mu-\frac{d}{2})\Gamma_\Omega(\mu)
 \end{equation}
 \end{lemma}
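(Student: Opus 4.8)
The plan is to reduce both integrals to two ``master integrals'' and then evaluate those by differentiating a single generating function under the integral sign. Since $p_{2,0}(x)=\frac d2(\tr x)^2+\tr(x^2)$ and $p_{1,1}(x)=(\tr x)^2-\tr(x^2)$, one has $I_{2,0}=\frac d2 A+B$ and $I_{1,1}=A-B$, where
\[
A=\int_\Omega e^{-\tr x}(\tr x)^2(\det x)^\mu\,d^*x,\qquad B=\int_\Omega e^{-\tr x}\tr(x^2)(\det x)^\mu\,d^*x\ .
\]
The key input is the Laplace transform of a power of the determinant (see \cite{fk}, Ch.~VII),
\[
\Psi(\xi):=\int_\Omega e^{-(\xi\vert x)}(\det x)^\mu\,d^*x=\Gamma_\Omega(\mu)(\det\xi)^{-\mu},\qquad \xi\in\Omega\ ,
\]
which is absolutely convergent and smooth in $\xi$ for $\Re\mu>(r-1)\frac d2$. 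Both $A$ and $B$ will be produced by applying a constant-coefficient differential operator in $\xi$ to $\Psi$ and then setting $\xi=e$, where $(\xi\vert x)=\tr x$; differentiation under the integral is licit because for $\xi$ in a neighborhood of $e$ the factor $e^{-(\xi\vert x)}$ dominates any polynomial in $x$.

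For $A$ I would restrict to the ray $\xi=se$, where $\Psi(se)=\Gamma_\Omega(\mu)s^{-r\mu}$ because $\det(se)=s^r$. Differentiating twice in $s$ and evaluating at $s=1$ then gives $A=r\mu(r\mu+1)\Gamma_\Omega(\mu)$. For $B$ I would use that $\tr(x^2)=(x\vert x)=\sum_a(x\vert u_a)^2$ for an orthonormal basis $(u_a)$ of $J$, so that the Euclidean Laplacian $\Delta_\xi=\sum_a\partial_{\xi_a}^2$ brings down exactly this factor:
\[
B=\Delta_\xi\Psi(\xi)\big\vert_{\xi=e}=\Gamma_\Omega(\mu)\,\Delta_\xi(\det\xi)^{-\mu}\big\vert_{\xi=e}\ .
\]
To compute the right-hand side I would use the two standard Jordan-calculus identities $D\log\det(\xi)[v]=(\xi^{-1}\vert v)$ and $\mathrm{Hess}\,\log\det(\xi)=-P(\xi^{-1})$. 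Writing $(\det\xi)^{-\mu}=e^{-\mu\log\det\xi}$ and taking the trace of its Hessian yields
\[
\Delta_\xi(\det\xi)^{-\mu}=(\det\xi)^{-\mu}\big(\mu^2\,\tr(\xi^{-2})+\mu\,\Tr P(\xi^{-1})\big)\ ,
\]
and at $\xi=e$ one has $\tr(e^{-2})=\tr e=r$ and $\Tr P(e)=\Tr\Id_J=n$, whence $B=\mu(r\mu+n)\Gamma_\Omega(\mu)$.

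It then remains to assemble and simplify using $n=r+\frac{r(r-1)}2 d$. For $I_{1,1}=A-B=\mu\big(r(r\mu+1)-(r\mu+n)\big)\Gamma_\Omega(\mu)$, the relation $r-n=-\frac{r(r-1)}2 d$ collapses the bracket to $r(r-1)(\mu-\frac d2)$; for $I_{2,0}=\frac d2 A+B$ the same substitution produces the factor $(\mu+1)\,r(1+\frac{rd}2)$, and both agree with the stated formulas. The only genuinely delicate step is the Laplacian computation for $B$: one must get the sign of the Hessian of $\log\det$ right and recognize that the operator trace $\Tr P(e)=n$ is what injects the dimension $n$ into the answer. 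It is precisely the interplay of this $n$ with the cancellation $r-n=-\frac{r(r-1)}2 d$ that manufactures the arithmetically significant factors $\mu-\frac d2$ (for $I_{1,1}$) and $\mu+1$ (for $I_{2,0}$); the remaining manipulations are routine differentiation under the integral sign.
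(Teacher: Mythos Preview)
Your argument is correct, but it proceeds quite differently from the paper. The paper invokes the spherical--polynomial integral from \cite{fk}, Lemma~XI.2.3: for $p\in\mathcal P_{\boldsymbol m}$ one has $\int_\Omega e^{-\tr x}p(x)(\det x)^\mu\,d^*x=\Gamma_\Omega(\boldsymbol m+\mu)\,p(e)$. Since $p_{2,0}\in\mathcal P_{2,0,\dots,0}$ and $p_{1,1}\in\mathcal P_{1,1,0,\dots,0}$, this gives both integrals in one line each, and the product formula for $\Gamma_\Omega$ immediately yields the factors $\mu(\mu+1)$ and $\mu(\mu-\frac d2)$. You instead pass to the basis $\{(\tr x)^2,\tr(x^2)\}$ and extract the two master integrals by differentiating the scalar Laplace transform $\Psi(\xi)=\Gamma_\Omega(\mu)(\det\xi)^{-\mu}$, once along the ray $\xi=se$ and once with the Euclidean Laplacian, using $D\log\det(\xi)=\xi^{-1}$ and $\mathrm{Hess}\log\det(\xi)=-P(\xi^{-1})$. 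The paper's route is shorter and more structural: the factorizations $\mu(\mu+1)$ and $\mu(\mu-\frac d2)$ fall out of the $\Gamma_\Omega$ shift without any algebra. Your route is more self--contained in that it only uses the rank--one Laplace transform and basic Jordan calculus rather than the full $\mathcal P_{\boldsymbol m}$ machinery; it also makes transparent where the dimension $n$ enters (via $\Tr P(e)=n$) and why the identity $n-r=\frac{r(r-1)}2 d$ is exactly what produces the clean factors after recombination.
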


\begin{proof} Let $p\in \mathcal P_{\boldsymbol m}$. For $\Re \mu> (r-1)\frac{d}{2}$,
\begin{equation}
\int _\Omega e^{-\tr y} p(y) (\det y)^\mu \,d^*x = \Gamma_\Omega(\boldsymbol m+\mu)\, p(e)\ .
\end{equation}
See \cite{fk} Lemma XI.2.3.
Apply now this formula and for $\boldsymbol m =(2,0,\dots,0)$
\[\int_\Omega e^{-\tr x} p_{2,0}(x)\,(\det x)^\mu\, d^*x=
\]
\[= (2\pi)^{\frac{n-r}{2}}r\left(1+\frac{rd}{2}\right) \Gamma(2+\mu) \prod_{j=2}^r \Gamma\left(\mu-(j-1)\frac{d}{2}\right)\ 
\]
\[=r\left(1+\frac{rd}{2}\right)(\mu+1)\,\mu \,\Gamma_\Omega(\mu)
\]

and for $\boldsymbol m=(1,1,0,\dots,0)$  to obtain
\[\int_\Omega e^{-\tr x}p_{1,1}(\det x)^\mu\, d^*x
\]
\[=(2\pi)^{\frac{n-r}{2}}r(r-1) \Gamma(\mu+1) \Gamma\left(\mu+1-\frac{d}{2} \right)\prod_{j=3}^r \Gamma\left(\mu-(j-1)\frac{d}{2}\right)\]
\[=r\,(r-1)\,\mu\,\left(\mu-\frac{d}{2}\right) \,\Gamma_\Omega(\mu)\ .
\]
\end{proof}

\begin{proposition}
 For $\Re \mu >(r-1)\frac{d}{2}$
\begin{equation} 
\begin{split}&\hskip 2.5cm\int_\Omega e^{-\tr x}\Tr(P(x)) (\det x) ^\mu d^*x \\ &=\frac{2}{d+2}\Gamma_\Omega(\mu)r\mu\Big((1+\frac{rd}{2})(\mu+1)+\frac{d^2}{4}(r-1)(\mu-\frac{d}{2})\Big)
\end{split}
\end{equation}
\begin{equation}
\begin{split}&\hskip 2.5cm\int_\Omega e^{-\tr x}\Tr(p_x) (\det x) ^\mu d^*x\\=&\frac{2}{d+2}\Gamma_\Omega(\mu)r\mu\Big((1+\frac{rd}{2})(\mu+1)-\frac{d}{2}(r-1)(\mu-\frac{d}{2})\Big)
\end{split}
\end{equation}
\begin{equation}
\begin{split}&\hskip 2.5cm\int_\Omega e^{-\tr x} ( P(x)e\vert e) (\det x)^\mu d^*x 
\\ =&\frac{2}{d+2}\Gamma_\Omega(\mu)r\mu\Big( (1+\frac{rd}{2})(\mu+1)-\frac{d}{2} (r-1) (\mu-\frac{d}{2})\Big)
\end{split}
\end{equation}
\begin{equation}
\begin{split}
&\hskip 2.5cm\int_\Omega e^{-\tr x} (p_x e\vert e) (\det x)^\mu d^*x 
\\
&=\frac{2}{d+2}\Gamma_\Omega(\mu)r\mu\Big( (1+\frac{rd}{2})(\mu+1)+(r-1) (\mu-\frac{d}{2})\Big)\ .
\end{split}
\end{equation}
\end{proposition}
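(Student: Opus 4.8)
The plan is to treat all four integrals uniformly, reducing each to the two basic integrals already evaluated in Lemma \ref{I20I11}. The key observation is that Lemma \ref{Trtr} has already expressed every one of the four integrands $\Tr P(x)$, $\Tr p_x$, $(P(x)e\vert e)$ and $(p_x e\vert e)$ as an explicit $\mathbb R$-linear combination of the two $K$-invariant quadratics $p_{2,0}$ and $p_{1,1}$. Since integration against $e^{-\tr x}(\det x)^\mu\, d^*x$ is linear, each of the four integrals is therefore the corresponding linear combination of $I_{2,0}$ and $I_{1,1}$, and nothing beyond Lemmas \ref{Trtr} and \ref{I20I11} is needed.

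Concretely, first I would record the two values $I_{2,0}= r(1+\tfrac{rd}{2})\mu(\mu+1)\Gamma_\Omega(\mu)$ and $I_{1,1}=r(r-1)\mu(\mu-\tfrac{d}{2})\Gamma_\Omega(\mu)$ from Lemma \ref{I20I11}, both of which carry the common factor $\Gamma_\Omega(\mu)\,r\mu$. For the first integral I would substitute the expansion $\Tr P(x)=\tfrac{2}{d+2}\big(p_{2,0}(x)+\tfrac{d^2}{4}p_{1,1}(x)\big)$, giving $\tfrac{2}{d+2}\big(I_{2,0}+\tfrac{d^2}{4}I_{1,1}\big)$; pulling out $\tfrac{2}{d+2}\Gamma_\Omega(\mu)\,r\mu$ then leaves the bracket $(1+\tfrac{rd}{2})(\mu+1)+\tfrac{d^2}{4}(r-1)(\mu-\tfrac{d}{2})$, which is the asserted formula. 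The remaining three are identical bookkeeping: the coefficient pair $(1,-\tfrac{d}{2})$ coming from the expansion of $\Tr p_x=(P(x)e\vert e)$ yields the second and third formulas at once, and the pair $(1,1)$ from $(p_x e\vert e)$ yields the fourth.

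There is no genuine obstacle here; the analytic content is entirely contained in the already-proved Lemmas \ref{Trtr} and \ref{I20I11}, and what remains is purely algebraic collection of terms. The only point worth flagging is a consistency check rather than a difficulty: the second and third displayed formulas coincide, exactly as they must, since Lemma \ref{Trtr} records the identity $\Tr p_x=(P(x)e\vert e)$ (both equal $\tr(x^2)$). Care should be taken only to keep the factor $\tfrac{2}{d+2}$ and the common factor $r\mu$ outside the bracket, so that the final expressions match the stated normalization.
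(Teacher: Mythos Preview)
Your proposal is correct and follows exactly the paper's own proof, which simply says to combine Lemma \ref{Trtr} and Lemma \ref{I20I11}. Your extra observation that the second and third formulas must agree because $\Tr p_x=(P(x)e\vert e)=\tr(x^2)$ is a nice sanity check.
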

\begin{proof} Combine Lemma \ref{Trtr} and Lemma \ref{I20I11} to get the results.
\end{proof}
As $\Tr(\Id_{\mathbb J}) = n$ and $(\Id_{\mathbb J} e,e) = \tr e = r $, $\alpha(\mu), \beta(\mu)$ have to satisfy
\[\begin{pmatrix}
(2+rd)(\mu+1)+(r-1)\frac{d^2}{2}(\mu-\frac{d}{2})&(2+rd)(\mu+1)-(r-1)d(\mu-\frac{d}{2})\\ \\
(2+rd)(\mu+1)-d(r-1)(\mu-\frac{d}{2})&(2+rd)(\mu+1)+2(r-1)(\mu-\frac{d}{2})
\end{pmatrix}
\begin{pmatrix}
\alpha(\mu)\\ \\ \beta(\mu)  
\end{pmatrix}
\]
\[
= \frac{d+2}{r\mu\Gamma_\Omega(\mu)}\begin{pmatrix} n\\ \\ r\end{pmatrix}\ .
\]
Now observe that the four coefficients of the matrix vanish identically for $d=-2$. Hence the system can be rewritten as
\[\begin{pmatrix}\left(\frac{(r-1)d}{2}+1\right)\mu-(r-1)\frac{d^2}{4}+\frac{(r-1)d}{2}+1&&\mu+\frac{(r-1)d}{2}+1\\ \\ \mu+\frac{(r-1)d}{2}+1& &r\mu +1
\end{pmatrix}
\begin{pmatrix}\alpha(\mu)\\ \\ \beta(\mu)\end{pmatrix}\]\[= \frac{1}{r\mu\Gamma_\Omega(\mu)}\begin{pmatrix}n\\ \\ r\end{pmatrix}
\]
The determinant of the matrix is a polynomial of degree 2 in $\mu$, and it is easily seen that it vanishes for $\mu=-1$ and $\mu= \frac{d}{2}$. Next, the coefficient of degree $2$ is equal to
\[\left\vert\begin{matrix}\left(\frac{(r-1)d}{2}+1\right)&& 1\\ \\ 1&r
 \end{matrix} \right\vert= n-1\ .
\]
Hence the determinant of the matrix of the system  is equal to
\[(n-1) (\mu+1)\left(\mu-\frac{d}{2}\right)\ .
\]
The solutions of the system are given by
\[\begin{pmatrix} \alpha(\mu)\\ \\ \beta(\mu) \end{pmatrix}  = 
 \frac{1}{r(n-1)\mu(\mu+1)(\mu-\frac{d}{2})\Gamma_\Omega(\mu)}\]\[
 \begin{pmatrix}r\mu+1&&-\mu-\frac{(r-1)d}{2}-1 \\ \\ 
 -\mu-\frac{(r-1)d}{2}-1&& \left(\frac{(r-1)d}{2}+1\right)\mu-(r-1)\frac{d^2}{4}+\frac{(r-1)d}{2}+1
 \end{pmatrix}
 \begin{pmatrix}n\\ \\ r
 \end{pmatrix}
 \]
 Notice that
 \[n(r\mu+1)-r\mu -(r(r-1)\frac{d}{2}+r)= (n-1)r\mu
 \]
 
 whereas
 \[-n\mu-\frac{(r-1)d}{2}\,n-n+n\mu-r(r-1)\frac{d^2}{4}+n=-(r-1)\frac{d}{2}\left(n+r\frac{d}{2}\right)
 \]
 \[ =-\frac{d}{2} \left(n(r-1)+r(r-1)\frac{d}{2} \right) = -\frac{d}{2}\left(n(r-1)+(n-r) \right)
 \]
 \[= -\frac{d}{2}(n-1)r\ .
 \]
 Hence
 \[\begin{pmatrix} \alpha(\mu)\\ \\ \beta(\mu) \end{pmatrix}  = 
 \frac{1}{\mu(\mu+1)(\mu-\frac{d}{2})\Gamma_\Omega(\mu)}\begin{pmatrix} 
 \mu\\ \\ -\frac{d}{2}
 \end{pmatrix} \ .
 \]
 Now, following  \eqref{alphabeta} set
 \[r_\mu(x) = \frac{1}{\mu(\mu+1)(\mu)-\frac{d}{2})\Gamma_\Omega(\mu)}\left(\mu P(x)-\frac{d}{2} p_x\right)(\det x)^\mu
 \]
 and  this achieves the proof of Theorem \ref{rmuOmega}.
\smallskip

We now discuss for which values of $\mu$ is $ r_\mu$  $\Herm^+(\mathbb J)$-valued. 

\begin{lemma}\label{lemmapyPy}
 Let $y\in \overline \Omega$. Then for any $v\in \mathbb J$
\begin{equation}\label{pyPy}
\big(p_y \,v\vert v\big)\leq r \big(P(y)v\vert v\big)\ .
\end{equation}
Moreover, if $y\in \Omega$ and $v\neq 0$, then equality in \eqref{pyPy} holds if and only if $y=t e, t\in \mathbb R^+$ and $v= \lambda e, \lambda\in \mathbb C^*$.
\end{lemma}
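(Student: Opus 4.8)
The plan is to diagonalize the quadratic form $v\mapsto(P(y)v\vert v)$ by means of the Jordan-algebraic square root of $y$ and then to read off \eqref{pyPy} as an instance of the Cauchy--Schwarz inequality. Since for fixed $v$ both sides of \eqref{pyPy} are polynomial, hence continuous, in $y$, it is enough to prove the inequality for $y\in\Omega$ and to obtain the boundary case $y\in\overline\Omega$ by passing to the limit; the equality assertion is in any case only made for $y\in\Omega$. So fix $y\in\Omega$ and set $\ell=P(y^{1/2})$. By the fundamental formula $P(P(a)b)=P(a)P(b)P(a)$ with $a=y^{1/2}$ and $b=e$ one has $P(y)=P(y^{1/2})^2=\ell^2$ and $\ell e=y$; moreover $\ell\in L$ is invertible with $\ell^{-1}e=P(y^{-1/2})e=y^{-1}$, and $\ell$ is self-adjoint for the Hermitian product $(v\vert w)=\tr v\overline w$ (it is symmetric for the real trace form and maps $J$ into $J$).

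Using the self-adjointness of $\ell$ I would rewrite $(P(y)v\vert v)=(\ell^2v\vert v)=\|\ell v\|^2$ and $(v\vert y)=(v\vert\ell e)=(\ell v\vert e)$, so that $(p_y v\vert v)=|(v\vert y)|^2=|(\ell v\vert e)|^2$. The inequality \eqref{pyPy} is then immediate from Cauchy--Schwarz applied to the vectors $\ell v$ and $e$, namely $|(\ell v\vert e)|^2\le\|\ell v\|^2\,\|e\|^2$, together with $\|e\|^2=(e\vert e)=\tr e=r$. This gives $(p_y v\vert v)\le r\,(P(y)v\vert v)$ for $y\in\Omega$, and the case $y\in\overline\Omega$ follows by continuity.

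For the equality statement, Cauchy--Schwarz forces equality (for $v\neq0$) precisely when $\ell v$ is collinear with $e$, that is $\ell v=\lambda e$, equivalently $v=\lambda\,\ell^{-1}e=\lambda\,y^{-1}$ with $\lambda\in\mathbb C^*$. Thus the equality locus is the whole complex line $\mathbb C^*\,y^{-1}$; it reduces to the configuration recorded in the statement exactly when one further requires $v$ to be proportional to $e$, which by $v=\lambda y^{-1}$ forces $y^{-1}\in\mathbb R^+e$ and hence $y=te$ with $t\in\mathbb R^+$. I expect the only delicate points to be the correct manipulation of the square-root element $y^{1/2}$ (the fundamental formula and the self-adjointness of $P(y^{1/2})$ relative to the Hermitian product), and the precise description of the equality set, where the Cauchy--Schwarz analysis yields the full line $\mathbb C^*y^{-1}$ rather than merely the pair $(y,v)=(te,\lambda e)$.
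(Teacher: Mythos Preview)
Your proof of the inequality is correct and in fact cleaner than the paper's. The paper argues by decomposing $\mathbb J=\mathbb C y\oplus (y)^\perp$ and asserting that both $p_y$ and $P(y)$ preserve this splitting; but $P(y)y=y^3$, which lies in $\mathbb C y$ only when all spectral values of $y$ coincide, so that invariance claim fails for a general $y\in\Omega$. Your substitution $w=P(y^{1/2})v$ sidesteps this entirely and reduces \eqref{pyPy} directly to Cauchy--Schwarz between $w$ and $e$, using $\|e\|^2=r$.

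Your analysis of the equality case is also correct, and it reveals that the equality characterization in the statement is too restrictive: for \emph{every} $y\in\Omega$, equality in \eqref{pyPy} holds on the whole line $v\in\mathbb C^*\,y^{-1}$, not only for $y=te$. Indeed, with $v=\lambda y^{-1}$ one has $P(y)v=\lambda y$, hence $(P(y)v\vert v)=|\lambda|^2(y\vert y^{-1})=|\lambda|^2 r$, while $(p_y v\vert v)=|\lambda|^2|(y^{-1}\vert y)|^2=|\lambda|^2 r^2$. The paper's deduction that equality forces $v\in\mathbb C y$ rests on the same faulty invariance of $\mathbb C y$ under $P(y)$. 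Fortunately, the only downstream use of the lemma is the corollary on positive semi-definiteness of $r_\mu(y)$, which requires merely that the supremum $r$ of the ratio $(p_y v\vert v)/(P(y)v\vert v)$ is attained; your argument supplies that (for instance at $y=e$, $v=e$, or more generally at $v=y^{-1}$ for any $y\in\Omega$).
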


\begin{proof}
We may assume that $y\neq 0$. The space $\mathbb J$ splits as $\mathbb J = \mathbb C y \,\oplus (y)^\perp$. Both subspaces are invariant under $p_y$. Moreover $P(y)$ preserves $\mathbb C y$ and as it is a selfadjoint operator, it also preserves $(y)^\perp$. Hence to prove the inequality \eqref{pyPy}, it is enough to prove the inequality separately on both subspaces. The inequality is trivial on $(y)^\perp$, so that it is enough to prove it on $\mathbb C y$. Now
\[\big(p_y\,y\vert y\big) = (y\vert y)^2,\qquad \big(P(y)\,y\vert y\big) =(y^3\vert y) =(y^2\vert y^2)\ .
\]
By the Cauchy-Schwarz inequality,
\begin{equation}\label{Cauchyeq}
(y\vert y) = (y^2\vert e) \leq  \big(y^2\vert y^2)^{\frac{1}{2}}(e\vert e)^{\frac{1}{2}},
\end{equation}
 so that 
 \[(y\vert y)^ 2\leq r (y^2\vert y^2)
 \]
 and hence \eqref{pyPy} is also satisfied on $\mathbb C y$. 

Assume now that $y\in \Omega$. Then as $P(y)$ is positive-definite, the inequality \eqref{pyPy} is strict on $(y)^\perp$. So equality can occur only if  $v=\mu y$ for some $\mu\in \mathbb C, \mu\neq 0$. Conversely, equality in \eqref{pyPy} implies
\[( y\vert y)^2 = r( y^2\vert y^2) ,
\]
hence corresponds to the case of equality in the Cauchy-Schwarz inequality \eqref{Cauchyeq} and implies $y=te$ for some  $t>0$. This completes the proof.
\end{proof}
The next proposition is just an immediate consequence of the previous lemma.
\begin{corollary} Let $\mu\in \mathbb R$. For $y\in \overline \Omega$, the operator $r_\mu(y)$ is positive semi-definite if and only if $\mu \geq\frac{rd}{2}$.
\end{corollary}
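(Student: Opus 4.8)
The claim to prove is the Corollary: for $\mu\in\mathbb R$, the operator
\[
r_\mu(y) = \frac{(\det y)^\mu}{\mu(\mu+1)(\mu-\frac d2)\Gamma_\Omega(\mu)}\Big(\mu P(y)-\tfrac d2\, p_y\Big)
\]
is positive semi-definite for all $y\in\overline\Omega$ if and only if $\mu\ge \frac{rd}{2}$.

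The plan is to reduce the positivity of $r_\mu(y)$ to the single scalar inequality already packaged in Lemma \ref{lemmapyPy}, treating the scalar prefactor and the operator part separately.

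\textbf{Step 1: sign of the prefactor.} First I would examine the scalar factor
\[
C(\mu)=\frac{1}{\mu(\mu+1)(\mu-\frac d2)\Gamma_\Omega(\mu)}\,.
\]
The regime of interest is $\mu\ge \frac{rd}{2}$, which since $r\ge 2$ forces $\mu\ge d>0$, so $\mu$, $\mu+1$ and $\mu-\frac d2$ are all strictly positive, and $\Gamma_\Omega(\mu)>0$ there (the product of Gamma values $\Gamma(\mu-(j-1)\frac d2)$ with positive arguments for $\Re\mu>(r-1)\frac d2$, times a positive power of $2\pi$). Hence $C(\mu)>0$, and the sign of $r_\mu(y)$ is governed entirely by the operator $\mu P(y)-\frac d2 p_y$. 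Note also $(\det y)^\mu\ge 0$ on $\overline\Omega$.

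\textbf{Step 2: reduce to the operator inequality.} For $y\in\overline\Omega$ and $v\in\mathbb J$, positivity of $r_\mu(y)$ amounts (given $C(\mu)>0$) to
\[
\mu\,(P(y)v\vert v)-\tfrac d2\,(p_y v\vert v)\ge 0\,.
\]
Since $P(y)$ is positive semi-definite on $\overline\Omega$, we have $(P(y)v\vert v)\ge 0$; likewise $(p_y v\vert v)=|(v\vert y)|^2\ge 0$. Invoking Lemma \ref{lemmapyPy}, namely $(p_y v\vert v)\le r\,(P(y)v\vert v)$, I would bound
\[
\mu\,(P(y)v\vert v)-\tfrac d2\,(p_y v\vert v)\ \ge\ \mu\,(P(y)v\vert v)-\tfrac d2\, r\,(P(y)v\vert v)=\Big(\mu-\tfrac{rd}{2}\Big)(P(y)v\vert v)\,.
\]
When $\mu\ge \frac{rd}{2}$ the right-hand side is $\ge 0$, proving positive semi-definiteness; this gives the ``if'' direction.

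\textbf{Step 3: necessity.} For the converse I would test on the extremal vector identified in the equality case of Lemma \ref{lemmapyPy}. Taking $y=e$ and $v=e$, the inequality of the lemma is an equality: $(p_e e\vert e)=(\tr e)^2 = r^2$ computed as $(e\vert e)^2=r^2$, while $r(P(e)e\vert e)=r(e\vert e)=r^2$. Hence for this choice
\[
\mu\,(P(e)e\vert e)-\tfrac d2\,(p_e e\vert e)=\mu\, r-\tfrac d2\, r^2=r\Big(\mu-\tfrac{rd}{2}\Big)\,.
\]
If $\mu<\frac{rd}{2}$ this is strictly negative, so $r_\mu(e)$ fails to be positive semi-definite (the prefactor $C(\mu)(\det e)^\mu$ being a nonzero real). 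This rules out all $\mu<\frac{rd}{2}$ and completes the equivalence.

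The argument is essentially immediate once Lemma \ref{lemmapyPy} is in hand; the only point requiring care is the bookkeeping of Step 1, ensuring the scalar prefactor is genuinely positive on the relevant range so that positivity of the operator $\mu P(y)-\frac d2 p_y$ transfers to $r_\mu(y)$. I do not anticipate a substantive obstacle, since the extremal vector $v=e$ at $y=e$ realizes the sharp constant $r$ in the lemma and thereby pins down the threshold $\mu=\frac{rd}{2}$ exactly.
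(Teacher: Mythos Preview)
Your argument is correct and is exactly the approach the paper intends: the paper states only that the corollary ``is just an immediate consequence of the previous lemma,'' and your Steps~2--3 spell out precisely how Lemma~\ref{lemmapyPy} yields both directions, with the equality case $y=e$, $v=e$ pinning down the threshold $\mu=\tfrac{rd}{2}$.

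One small point of bookkeeping in Step~3: saying the prefactor $C(\mu)(\det e)^\mu$ is ``a nonzero real'' is not by itself sufficient---if it were negative, a negative quadratic form in the operator part would yield a positive $r_\mu(e)$. You should note that the Step~1 analysis in fact gives $C(\mu)>0$ on the whole range $\mu>(r-1)\tfrac{d}{2}$ where $r_\mu$ is defined (since $r\ge 2$ already forces $\mu>\tfrac{d}{2}$ there, and $\Gamma_\Omega(\mu)>0$), so the sign transfers correctly. With that clarification the proof is complete and matches the paper.
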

\section {The singular cases}

To prove Theorem \ref{Wallachset}, it remains (cf \cite{c95}) to look to the cases $\mu=k\frac{d}{2}, 0\leq k\leq r-1$. If $k=0$, i.e. for the orbit $\mathcal O^{(0)} = \{ 0\}$, there is clearly no solution to the condition \eqref{invR'}. The next section will treat the case $k=1$. We examine in this section the case where $ 2\leq k\leq r-1$, assuming that $r\geq 3$. The next lemma is a generalization of Lemma \ref{lemmapyPy}.
\begin{lemma}\label{lemmapyPy2}
 Let $1\leq k\leq r$ and let $y\in \Omega^{(k)}$. Then for all $v\in \mathbb J$
\begin{equation}\label{pyPy2}
\big(p_y\,v\,\vert\,v\big) \leq k\big(P(y)v\,\vert v\big)\ .
\end{equation}
\end{lemma}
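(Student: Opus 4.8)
The plan is to diagonalize both operators simultaneously via the Peirce decomposition attached to a spectral decomposition of $y$. Since $y\in\Omega^{(k)}$ has rank $k$, the spectral theorem furnishes a Jordan frame $(c_1,\dots,c_r)$ and scalars $\lambda_1,\dots,\lambda_k>0$, with $\lambda_{k+1}=\dots=\lambda_r=0$, such that $y=\sum_{i=1}^k\lambda_i c_i$. First I would recall the standard fact (Faraut–Korányi) that the Peirce spaces $\mathbb J_{ij}$, $1\le i\le j\le r$, attached to this frame diagonalize every $P(a)$ with $a=\sum a_ic_i$; concretely $P(y)$ acts on $\mathbb J_{ij}$ as the scalar $\lambda_i\lambda_j$. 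In particular $P(y)$ is positive semi-definite, and for an arbitrary $v\in\mathbb J$ with Peirce decomposition $v=\sum_{i\le j}v_{ij}$ one has $(P(y)v\mid v)=\sum_{i\le j}\lambda_i\lambda_j\lVert v_{ij}\rVert^2\ge 0$, the blocks being mutually orthogonal.

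Next I would reduce the left-hand side to the diagonal Peirce components. Since $p_y$ is the rank-one operator $p_y v=(v\mid y)y$, one has $(p_y v\mid v)=|(v\mid y)|^2$. Because $y=\sum_{i=1}^k\lambda_i c_i$ lies in $\bigoplus_i \mathbb J_{ii}$ and the Peirce spaces are mutually orthogonal, only the diagonal blocks $v_{ii}=\alpha_i c_i\in\mathbb C c_i$ contribute, giving $(v\mid y)=\sum_{i=1}^k\lambda_i\alpha_i$, where I use $(c_i\mid c_i)=\tr c_i=1$.

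The inequality then drops out of the Cauchy–Schwarz inequality applied to the $k$ indices carrying a nonzero eigenvalue: writing $\sum_{i=1}^k\lambda_i\alpha_i=\sum_{i=1}^k 1\cdot(\lambda_i\alpha_i)$,
\[
|(v\mid y)|^2\ \le\ k\sum_{i=1}^k\lambda_i^2|\alpha_i|^2
\ =\ k\sum_{i=1}^k\lambda_i^2\lVert v_{ii}\rVert^2
\ \le\ k\!\!\sum_{1\le i\le j\le r}\!\!\lambda_i\lambda_j\lVert v_{ij}\rVert^2
\ =\ k\,(P(y)v\mid v),
\]
the middle inequality being merely the omission of the non-negative off-diagonal and higher blocks. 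This is exactly \eqref{pyPy2}.

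The only genuine point of care — the main obstacle — is the degeneracy of $P(y)$: since $y$ has rank $k<r$, the operator $P(y)$ is no longer invertible and need not preserve $\mathbb C y$ (it does so only when $y$ is a multiple of $c_1+\dots+c_k$), so the two-dimensional reduction $\mathbb J=\mathbb C y\oplus(y)^\perp$ of Lemma \ref{lemmapyPy} is unavailable. Working block-diagonally through the full Peirce decomposition and simply discarding the unused non-negative terms circumvents this. Equivalently, one may restrict everything to the Peirce subalgebra $J(c,1)$ of the support idempotent $c=c_1+\dots+c_k$, on which $y$ is of full rank $k$ and one may invoke Lemma \ref{lemmapyPy} directly, the extreme case $k=1$ (where \eqref{pyPy2} is an equality) being checked by hand.
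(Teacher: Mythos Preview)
Your proof is correct, but it follows a genuinely different route from the paper's. You diagonalize $P(y)$ by choosing a Jordan frame adapted to the spectral decomposition of $y$, compute both quadratic forms in Peirce coordinates, and finish with Cauchy--Schwarz over the $k$ nonzero eigenvalues. This is entirely self-contained and makes the constant $k$ appear transparently.

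The paper instead exploits the transitivity of $L$ on $\Omega^{(k)}$: writing $y=\ell c$ for an idempotent $c$ of rank~$k$ and some $\ell\in L$, both operators transform as $p_y=\ell\,p_c\,\ell^t$ and $P(y)=\ell\,P(c)\,\ell^t$, so it suffices to check the inequality for $y=c$. At that point $P(c)$ \emph{does} preserve $\mathbb C c$ (since $P(c)c=c$), and the simple two-block splitting $\mathbb J=\mathbb C c\oplus (c)^\perp$ of Lemma~\ref{lemmapyPy} goes through verbatim, with equality on $\mathbb C c$ because $(c\vert c)=k$. In other words, the obstacle you flagged---that $P(y)$ need not preserve $\mathbb C y$ when the eigenvalues of $y$ are distinct---is removed by first normalizing $y$ to have all nonzero eigenvalues equal to~$1$. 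Your approach trades this normalization trick for an explicit coordinate computation; the paper's is shorter but relies on the transitivity statement, while yours needs only the standard Peirce diagonalization of $P(\sum\lambda_ic_i)$.
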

\begin{proof}
Let $c$ be an idempotent of rank $k$. Decompose $\mathbb V$ as $\mathbb C c\oplus (c)^\perp$. Both subspaces are invariant by $p_c$ and by $P(c)$. Hence it suffices to verify \eqref{pyPy2} separately on each subspace. The inequality being trivial on $(c)^\perp$, it suffices to verify it on $\mathbb C c$. So let $v=\lambda c$, with $\lambda\in \mathbb C$. Then
\[\big(p_c(v)\vert v\big) = \vert\lambda\vert^2(c\vert c),\qquad \big(P(c)v\vert v\big) = \vert \lambda\vert^2\ .
\]
As $ (c,c) = k$, \eqref{pyPy2} is valid for  $y=c$. Now let $y$ in $\Omega^{(k)}$. There exists an idempotent $c$ of rank $k$ and some $\ell \in L$ such that $y=\ell c$. Now $p_y = \ell^*p_c\ell$ and $P(y) = \ell^*P(c)\ell$ and \eqref{pyPy2} follows by using the result obtained for $c$ when applied to $w=\ell v$.
\end{proof}

\begin{theorem} Let $3\leq k\leq r-1$. Let 
\[r_k(y) = \frac{4}{k(k-1)(kd+2)d}\, (kP(y)-p_y)\ .
\]
Then 
\begin{equation}\label{rk}
\int_{\Omega^{(k)}} e^{-\tr y} r_k(y) d\nu_k(y) = \Id_\mathbb J
\end{equation}
\end{theorem}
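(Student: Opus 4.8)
The plan is to show that the operator
\[
M:=\int_{\Omega^{(k)}} e^{-\tr y}\,r_k(y)\,d\nu_k(y)
\]
equals $\Id_{\mathbb J}$ by first pinning down its shape through a symmetry argument and then fixing two scalars via $\Tr M$ and $(Me\vert e)$. First I would check that $M$ commutes with $K$. For $k_0\in K\subset O(J)$ one has $\chi(k_0)=1$, so both $\nu_k$ and $y\mapsto e^{-\tr y}$ are $K$-invariant; moreover $P(k_0 y)=k_0 P(y)k_0^{-1}$, and since $(v\vert k_0 y)=(k_0^{-1}v\vert y)$ one also gets $p_{k_0 y}=k_0\,p_y\,k_0^{-1}$. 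Hence $r_k(k_0 y)=k_0\,r_k(y)\,k_0^{-1}$, and the change of variable $y\mapsto k_0 y$ gives $M=k_0 M k_0^{-1}$. As $M$ is Hermitian and $K$-invariant, Lemma \ref{invquadJ} yields real numbers $\alpha,\beta$ with $Mv=\alpha v+\beta (v\vert e)\,e$. It then suffices to prove $\Tr M=n$ and $(Me\vert e)=r$: these two equations read $\alpha n+\beta r=n$ and $\alpha r+\beta r^2=r$, a linear system of determinant $r(n-1)\neq 0$ whose unique solution is $\alpha=1,\ \beta=0$, that is $M=\Id_{\mathbb J}$.

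The key computational input is the evaluation of $\int_{\Omega^{(k)}} e^{-\tr y}\,p(y)\,d\nu_k(y)$ for $p\in\mathcal P_{\boldsymbol m}$, which I would obtain by analytic continuation of the Riesz integral. Indeed, for $\Re\mu>(r-1)\tfrac{d}{2}$,
\[
\frac{1}{\Gamma_\Omega(\mu)}\int_\Omega e^{-\tr x}\,p(x)\,(\det x)^\mu\,d^*x=\frac{\Gamma_\Omega(\boldsymbol m+\mu)}{\Gamma_\Omega(\mu)}\,p(e),
\]
and for the two indices at hand the right-hand side is a polynomial in $\mu$; combined with $T_{kd/2}=\nu_k$ (Proposition \ref{nuk}) this specializes to $\mu=\tfrac{kd}{2}$. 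Concretely, dividing the formulas of Lemma \ref{I20I11} by $\Gamma_\Omega(\mu)$ gives the entire functions $T_\mu\big(e^{-\tr\,\cdot\,}p_{2,0}\big)=r\big(1+\tfrac{rd}{2}\big)\mu(\mu+1)$ and $T_\mu\big(e^{-\tr\,\cdot\,}p_{1,1}\big)=r(r-1)\mu\big(\mu-\tfrac{d}{2}\big)$, whence
\[
J^{(k)}_{2,0}:=\int_{\Omega^{(k)}} e^{-\tr y}\,p_{2,0}(y)\,d\nu_k(y)=r\Big(1+\tfrac{rd}{2}\Big)\tfrac{kd}{2}\Big(\tfrac{kd}{2}+1\Big),
\]
\[
J^{(k)}_{1,1}:=\int_{\Omega^{(k)}} e^{-\tr y}\,p_{1,1}(y)\,d\nu_k(y)=r(r-1)\,\tfrac{kd}{2}\,\tfrac{(k-1)d}{2}.
\]
I expect the delicate point to be the rigorous justification of this continuation, since $e^{-\tr\,\cdot\,}p$ is not Schwartz; one controls it by the Gaussian-type decay of $e^{-\tr y}$ along $\overline\Omega$, which dominates the polynomial factor and allows one to pass from Schwartz approximants to the measure $\nu_k$.

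With these two integrals in hand, the conclusion is bookkeeping. Using Lemma \ref{Trtr} to expand $\Tr P$, $\Tr p_x=(P(x)e\vert e)$ and $(p_x e\vert e)$ in the basis $\{p_{2,0},p_{1,1}\}$, I would write $\Tr r_k$ and $(r_k e\vert e)$ as linear combinations of $p_{2,0}$ and $p_{1,1}$, integrate against $e^{-\tr y}\,d\nu_k$ using $J^{(k)}_{2,0}$ and $J^{(k)}_{1,1}$, and simplify. The common constant collapses to $\tfrac{2r}{d+2}$ once the factor $(k-1)\tfrac{kd}{2}\big(\tfrac{kd}{2}+1\big)$ is cancelled against the normalization $\tfrac{4}{k(k-1)(kd+2)d}$; the bracket for $\Tr M$ is then $\big(1+\tfrac{rd}{2}\big)+\tfrac{d^2}{4}(r-1)$, and the relation $n=r+\tfrac{r(r-1)}{2}d$ gives $\Tr M=n$, while the bracket for $(Me\vert e)$ is $\big(1+\tfrac{rd}{2}\big)-\tfrac{d}{2}(r-1)=1+\tfrac{d}{2}$, giving $(Me\vert e)=\tfrac{2r}{d+2}\cdot\tfrac{d+2}{2}=r$. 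By the first paragraph this forces $M=\Id_{\mathbb J}$, completing the proof.
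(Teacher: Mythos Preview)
Your argument is correct and rests on the same idea as the paper's proof---analytic continuation of the Riesz integrals $T_\mu$ to $\mu=k\tfrac{d}{2}$---but the organization differs. The paper simply rewrites Theorem~\ref{rmuOmega} as the operator-valued identity
\[
\Id_{\mathbb J}=\frac{1}{\mu(\mu+1)\big(\mu-\tfrac{d}{2}\big)}\,T_\mu\!\left(e^{-\tr y}\Big(\mu P(y)-\tfrac{d}{2}\,p_y\Big)\right),
\]
observes that both sides extend analytically in $\mu$, and specializes at $\mu=k\tfrac{d}{2}$ using $T_{kd/2}=\nu_k$. You instead avoid invoking Theorem~\ref{rmuOmega}: you first use $K$-invariance and Lemma~\ref{invquadJ} to reduce $M$ to two unknown scalars, then determine them by continuing only the two scalar integrals $J^{(k)}_{2,0}$ and $J^{(k)}_{1,1}$ from Lemma~\ref{I20I11}. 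This is more self-contained and slightly more elementary (you continue polynomial-valued quantities rather than operator-valued ones), while the paper's route is shorter because the heavy lifting has already been absorbed into Theorem~\ref{rmuOmega}. A minor slip: the determinant of your $2\times2$ system is $r^2(n-1)$, not $r(n-1)$, but this does not affect the conclusion. Your caveat about $e^{-\tr\,\cdot\,}p\notin\mathcal S(J)$ is well taken; the paper glosses over the same point.
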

\begin{proof} Rewrite Theorem \ref{rmuOmega} using the Riesz integrals (cf \eqref{Riesz}), extended to operator-valued functions  to obtain
\begin{equation}\label{Rieszrmu}
\Id_\mathbb J = \frac{1}{\mu(\mu+1)\big(\mu-\frac{d}{2}\big)} \left(T_\mu(y)\,, \,\mu P(y)-\frac{d}{2}\,p_y\right) \ .
\end{equation}
The statement is valid for $\Re(\mu)$ large enough, and both sides can be continued analytically. By analytic continuation, the two sides coincide at $\mu = k\frac{d}{2}$ for $k=2,\dots, r-1$, and, as $d\nu_k= T_{k\frac{d}{2}}$, \eqref{rk} is just an equivalent formulation. 
 \end{proof}
 The positivity of $r_k(y)$ for $y\in \mathcal O^{(1)}$ follows from Lemma \ref{lemmapyPy2} and hence the values $\displaystyle \mu=k\frac{d}{2},\ 2\leq k\leq  r-1$ belong to the Wallach set.
 
 \section{The case $\mu=\frac{d}{2}$}
The last case concerns the case where $\mu= \frac{d}{2}$. In fact, the factor $(\mu-\frac{d}{2})$ in the denominator of the right hand expression  of \eqref{Rieszrmu} leads to an indetermination, as for $c$ a primitive (i.e. $\rank (c) = 1)$ idempotent,
\[p_c(v) = P(c)v=(v,c)\, c\ ,
\]
and hence the expression $\mu P(y)-\frac{d}{2} p_y$ vanishes for $\mu=\frac{d}{2} $ identically  on $ \Omega^{(1)}$. 
\begin{proposition}\label{no}
 There is no $\Herm^+(\mathbb J)$-valued measure supported on $\overline {\Omega^{(1)}}$ such that conditions \eqref{invR'} and \eqref{normR'}
are satisfied.
\end{proposition}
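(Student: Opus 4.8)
The plan is to argue by contradiction: suppose a measure $dR$ exists, supported on $\overline{\Omega^{(1)}}=\Omega^{(1)}\cup\{0\}$ and satisfying \eqref{invR'} and \eqref{normR'} with $\mu=\frac d2$. The point of departure is that on rank-one elements the two covariant building blocks coincide: for a primitive idempotent $c$ one has $P(c)v=p_c\,v=(v\vert c)\,c$, and since both $P$ and $p$ obey the same transformation law $F(\ell y)=\ell\,F(y)\,\ell^t$ (compare \eqref{covr} and Theorem \ref{rmuOmega}), the identity $P(y)=p_y$ propagates to the whole orbit $\Omega^{(1)}=L\cdot c$. This is exactly why the bulk density $\mu P(y)-\frac d2 p_y$ degenerates here, and it means that covariance alone must now determine $dR$.

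First I would pin down $dR$. There is no mass at the origin: the point $0$ is fixed by every $\ell\in L$, so \eqref{invR'} forces a hypothetical mass $M\in\Herm^+(\mathbb J)$ at $0$ to satisfy $M=\chi(\ell)^{d/2}\ell M\ell^t$; taking $\ell=t\,\id$ with $t>0$, for which $\chi(t\,\id)=t^r$, gives $M=t^{rd/2+2}M$ for all $t$, hence $M=0$. On the single orbit $\Omega^{(1)}\cong L/L_c$ the quasi-invariant measure with cocycle $\chi^{d/2}$ is, up to a scalar, the measure $\nu_1$ of Proposition \ref{nuk}, so I may write $dR(y)=A(y)\,d\nu_1(y)$ with $A$ an $\Herm^+(\mathbb J)$-valued density. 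Absorbing the $\chi^{d/2}$ factor of $\nu_1$, the covariance \eqref{invR'} becomes the clean relation $A(\ell y)=\ell\,A(y)\,\ell^t$.

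The key step is to evaluate this at the base point $c=c_1$ using the stabilizer. For $a=c_1+\sum_{j=2}^r s_j c_j$ with $s_j>0$, the element $\ell=P(a)$ lies in $L$ (by \eqref{chiP}), is self-adjoint, and fixes $c_1$; hence $A(c_1)=P(a)\,A(c_1)\,P(a)$. With respect to the Peirce decomposition $\mathbb J=\mathbb C c_1\oplus\bigoplus_{j\ge 2}(J_{1j})_{\mathbb C}\oplus\bigoplus_{2\le j\le k}(J_{jk})_{\mathbb C}$, the operator $P(a)$ is diagonal with eigenvalue $a_ia_j$ on $(J_{ij})_{\mathbb C}$, i.e. $1$ on $\mathbb C c_1$, $s_j$ on $(J_{1j})_{\mathbb C}$, and $s_js_k$ on $(J_{jk})_{\mathbb C}$. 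Conjugation multiplies the block of $A(c_1)$ between two Peirce spaces by the product of the corresponding eigenvalue-monomials; since these are monomials in the $s_j$ with non-negative exponents, such a product equals $1$ identically only when both spaces are $\mathbb C c_1$. Requiring invariance for all $s_j>0$ therefore kills every block except that on $\mathbb C c_1$, giving $A(c_1)=a_0\,P(c_1)$ for some $a_0\ge 0$, and spreading over the orbit yields $A(y)=a_0\,P(y)$, i.e. $dR(y)=a_0\,P(y)\,d\nu_1(y)$.

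Finally I would reach the contradiction by testing \eqref{normR'} against $\Tr$ and against the vector $e$. Since $\Tr P(y)=(P(y)e\vert e)=\tr(y^2)$ for $y\in\Omega^{(1)}$ (both equal $\lambda^2$ on $y=\lambda c$, using $\Tr P(c)=1$ from the proof of Lemma \ref{Trtr}), the two scalars
\[
\Tr\int_{\overline\Omega}e^{-\tr y}\,dR(y)
= a_0\int_{\Omega^{(1)}}e^{-\tr y}\,\tr(y^2)\,d\nu_1(y)
= \Big(\int_{\overline\Omega}e^{-\tr y}\,dR(y)\,e\,\Big\vert\,e\Big)
\]
coincide. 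But \eqref{normR'} forces the left member to equal $\Tr\Id_{\mathbb J}=n$ and the right member to equal $(\Id_{\mathbb J}e\vert e)=(e\vert e)=r$, whence $n=r$; this contradicts $n=r+\frac{r(r-1)}2 d>r$ for $r\ge 2$, so no such measure exists. The main obstacle is the third paragraph: one must be certain that stabilizer invariance genuinely annihilates every Peirce block of $A(c_1)$ outside $\mathbb C c_1$, which rests on the explicit eigenvalue pattern $a_ia_j$ of $P(a)$ together with the positivity of the scalars $s_j$. Once the density is known to be a scalar multiple of $P(y)$, the trace-versus-$e$ comparison makes the failure of \eqref{normR'} immediate.
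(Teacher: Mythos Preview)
Your argument is correct and follows essentially the same route as the paper's proof. Both reduce to showing that the density at a primitive idempotent $c$ must be a scalar multiple of $P(c)$ via stabilizer invariance (you use the multi-parameter torus $P\big(c_1+\sum_{j\ge 2}s_jc_j\big)$ acting diagonally on the Peirce blocks, the paper uses the one-parameter family $\ell_s=P(c+s(e-c))$; either suffices), then propagate $A(y)=a_0P(y)$ over the orbit by $A(\ell y)=\ell A(y)\ell^t$, and finally derive the contradiction $n=r$ from $\Tr P(y)=(P(y)e\vert e)$ on rank-one elements. Your explicit exclusion of mass at the origin is a small addition the paper leaves implicit.
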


Before giving the proof, a few elementary lemmas are needed.
\begin{lemma}\label{ells}
Let $c$ be an idempotent in $J$. For $s>0$, let
$\ell_s= P(c+s(e-c))$. Then 
\[ \ell_s= \id \text{ on } \mathbb J(c,1),\qquad
 \ell_s = s \id  \text{ on } \mathbb J(c,\frac{1}{2})\qquad
 \ell_s = s^2 \id  \text{ on } \mathbb J(c,0)\ .\]
\end{lemma}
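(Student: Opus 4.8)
The plan is to reduce everything to the Peirce eigenspaces of $c$ and to exploit that $P$ is a quadratic (degree-two homogeneous) map, hence polarizable. First I would introduce the complementary idempotent $c' = e - c$, which satisfies $c'^2 = c'$, $cc' = 0$ and $c + c' = e$; the Peirce decomposition recalled in Section~1 then gives $\mathbb J(c',1) = \mathbb J(c,0)$, $\mathbb J(c',\tfrac12) = \mathbb J(c,\tfrac12)$ and $\mathbb J(c',0) = \mathbb J(c,1)$. Writing $\ell_s = P(c + sc')$ and using the symmetric bilinear map $P(\,\cdot\,,\,\cdot\,)$ associated with $P$ (normalized so that $P(u+v) = P(u) + 2P(u,v) + P(v)$), I obtain
\[
\ell_s = P(c) + 2s\,P(c,c') + s^2\,P(c')\ .
\]
It then suffices to identify the three operators $P(c)$, $P(c,c')$, $P(c')$ on each Peirce space $\mathbb J(c,\alpha)$, $\alpha \in \{0,\tfrac12,1\}$.

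For this I would use the standard expression $P(x) = 2L_x^2 - L_{x^2}$, where $L_x$ denotes multiplication by $x$ (not to be confused with the structure group $L$), together with the fact that on $\mathbb J(c,\alpha)$ the operator $L_c$ acts as the scalar $\alpha$. The decisive simplification is that $L_{c'} = \id - L_c$, so $L_c$ and $L_{c'}$ commute and are simultaneously diagonal on the Peirce spaces, $L_{c'}$ acting as $1-\alpha$. Hence, on $\mathbb J(c,\alpha)$,
\[
P(c) = 2L_c^2 - L_c \ \text{acts as}\ 2\alpha^2 - \alpha, \qquad P(c') = 2L_{c'}^2 - L_{c'} \ \text{acts as}\ 2(1-\alpha)^2 - (1-\alpha),
\]
and, since $cc' = 0$,
\[
P(c,c') = L_cL_{c'} + L_{c'}L_c - L_{cc'} = 2L_cL_{c'} \ \text{acts as}\ 2\alpha(1-\alpha)\ .
\]
Tabulating for $\alpha = 1,\tfrac12,0$ yields eigenvalues $(1,0,0)$ for $P(c)$, $(0,\tfrac12,0)$ for $P(c,c')$, and $(0,0,1)$ for $P(c')$. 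Substituting into the expansion gives $\ell_s = 1$ on $\mathbb J(c,1)$, $\ell_s = 2s\cdot\tfrac12 = s$ on $\mathbb J(c,\tfrac12)$, and $\ell_s = s^2$ on $\mathbb J(c,0)$, which is the assertion.

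There is essentially no hard step here: once the polarization is written the computation is elementary. The one point that requires a little care is the cross term $P(c,c')$, and in particular the observation that $L_c$ and $L_{c'}$ commute — immediate from $L_{c'} = \id - L_c$, but less transparent for a general orthogonal idempotent. I would also note that, since both the Peirce decomposition and the map $P$ extend $\mathbb C$-linearly from $J$ to $\mathbb J$, it is enough to do the eigenvalue bookkeeping on the real spaces $J(c,\alpha)$ and then extend by complex linearity. As a slicker alternative one could choose a Jordan frame $(c_1,\dots,c_r)$ refining $c = c_1 + \dots + c_k$, write $c + s(e-c) = \sum_i t_i c_i$ with $t_i = 1$ for $i\le k$ and $t_i = s$ otherwise, and invoke the Faraut--Koranyi formula $P(\sum_i t_i c_i) = \sum_{i\le j} t_i t_j P_{ij}$ for the Peirce projections $P_{ij}$; reading off $t_i t_j$ on the blocks constituting $\mathbb J(c,1)$, $\mathbb J(c,\tfrac12)$, $\mathbb J(c,0)$ returns $1$, $s$, $s^2$ directly.
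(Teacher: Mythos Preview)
Your proof is correct and follows essentially the same route as the paper: expand $P(c+s(e-c))$ via the quadratic formula $P(x)=2L_x^2-L_{x^2}$ (equivalently, via polarization) and evaluate the resulting operators on each Peirce eigenspace using that $L_c$ acts as the scalar $\alpha$ there. Your write-up is more detailed than the paper's one-line ``routine calculation'', and the alternative argument via a refining Jordan frame and the formula $P\big(\sum_i t_i c_i\big)=\sum_{i\le j} t_it_j P_{ij}$ is a nice bonus.
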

\begin{proof} By using the definition of the quadratic operator $P$
\[P\big(s+(e-c)\big) = P(c)+2s\big((L(c)L(e-c)+L(e-c)L(c)\big) + s^2 L(e-c)^2
\]
and the verification of the lemma is a routine calculation.
\end{proof}
\begin{lemma}\label{Sc}
 Let $c$ be a primitive idempotent of $J$. Let $L^c$ be the stabilizer of $c$ in $L$. Let $S\in \Herm^+(\mathbb J)$ and assume that
\begin{equation}\label{covS}
\forall \ell\in L^c,\qquad S=\ell \,S\,\ell^t\ .
\end{equation}
Then there exists a scalar $\alpha\in [0, +\infty)$ such that $S=\alpha P(c)$.
\end{lemma}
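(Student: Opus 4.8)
The plan is to exploit the one-parameter family $\ell_s = P(c+s(e-c))$, $s>0$, furnished by Lemma \ref{ells}, which in fact lies in $L^c$. Indeed, since $c$ and $e-c$ are orthogonal idempotents, $c+s(e-c)$ is a positive element of $J$ for $s>0$, so $P(c+s(e-c))\in L$ by \eqref{chiP}; and by Lemma \ref{ells} this operator acts as the identity on $\mathbb J(c,1)=\mathbb C c$, whence $\ell_s c = c$ and $\ell_s\in L^c$. Because $\ell_s$ is $P$ of a symmetric element it is self-adjoint, so $\ell_s^t=\ell_s$, and the hypothesis \eqref{covS} reads $S=\ell_s\,S\,\ell_s$ for every $s>0$.

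First I would write $S$ in block form relative to the complexified Peirce decomposition $\mathbb J=\mathbb J(c,1)\oplus\mathbb J(c,\tfrac12)\oplus\mathbb J(c,0)$, say $S=(S_{\nu\mu})$ with $\nu,\mu\in\{1,\tfrac12,0\}$. By Lemma \ref{ells}, $\ell_s$ acts on $\mathbb J(c,\nu)$ by the scalar $s^{a_\nu}$ with $a_1=0$, $a_{1/2}=1$, $a_0=2$. Hence $S=\ell_s S\ell_s$ forces $S_{\nu\mu}=s^{a_\nu+a_\mu}S_{\nu\mu}$ for all $s>0$, so every block with $a_\nu+a_\mu>0$ must vanish. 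The only surviving block is $S_{11}$, acting on $\mathbb J(c,1)$.

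Since $c$ is primitive, $\mathbb J(c,1)=\mathbb C c$ is one-dimensional, so the Hermitian operator $S_{11}$ is a real scalar $\alpha$ times the identity of $\mathbb C c$; extended by zero on the remaining Peirce spaces, this is exactly $\alpha$ times the orthogonal projection onto $\mathbb C c$. I would then identify that projection with $P(c)$: the eigenvalues of $P(c)=2L(c)^2-L(c)$ on $\mathbb J(c,1),\mathbb J(c,\tfrac12),\mathbb J(c,0)$ are $1,0,0$ respectively (equivalently, $P(c)=p_c$ for a primitive idempotent, as already recorded). Thus $S=\alpha P(c)$, and $\alpha\ge 0$ because $S\in\Herm^+(\mathbb J)$.

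The argument is essentially a one-parameter diagonalization, so I do not anticipate a serious obstacle. The only points demanding care are the bookkeeping for the transpose convention—ensuring $\ell_s^t=\ell_s$, which holds because $\ell_s$ is a real symmetric operator whether one transposes with respect to the $\mathbb C$-bilinear trace form or takes the adjoint for the Hermitian form—and the verification that $\ell_s$ genuinely stabilizes $c$. It is worth noting that this uses only the subfamily $\{\ell_s\}_{s>0}$ of $L^c$ and not the full stabilizer.
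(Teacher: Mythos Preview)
Your proof is correct and follows essentially the same route as the paper: both exploit the one-parameter family $\ell_s=P(c+s(e-c))\in L^c$ from Lemma~\ref{ells} and the relation $S=\ell_s S\ell_s$ to kill the Peirce components outside $\mathbb J(c,1)$, then use that $\mathbb J(c,1)=\mathbb C c$ is one-dimensional. Your block presentation $S_{\nu\mu}=s^{a_\nu+a_\mu}S_{\nu\mu}$ is a tidy way to phrase what the paper does componentwise (showing $Sv=0$ on $\mathbb J(c,\tfrac12)$ and $\mathbb J(c,0)$, then $Sc\in\mathbb C c$), but the substance is identical.
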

\begin{proof} Recall first the formula $P(\ell x) = \ell P(x)\ell^t$ for any element $x$ and $\ell\in L$. Hence $P(c)$ satisfies \eqref{covS}. Now assume that $S\in \Herm^+(\mathbb J)$ satisfies \eqref{covS}. For $s>0$,  $\ell_s= P\big(c+s(e-c)\big)$  belongs to  $L^{c}$ and is symmetric. Hence condition \eqref{covd/2} implies that $S= \ell_s\,S\,\ell_s$ for any $s>0$.
  Let $v\in \mathbb J(c,\frac{1}{2})$. Then by Lemma \ref{ells}
\[\ell_s \,S\,v= \frac{1}{s}\,S \,v\]
which is clearly impossible unless $Sv=0$. Hence $S$ vanishes on $J(c,\frac{1}{2})$. A similar argument shows that  $S$ vanishes on $J(c,0)$. As $c$ is primitive $J(c,1)=\mathbb Rc_1$ and $Sc_1 = \ell_sS\ell_s c_1= \ell_s Sc_1$, which forces $Sc_1\in \mathbb C c_1$. The conclusion follows.
\end{proof}

Now we are in condition to prove Proposition \ref{no}. Assume there exists a measure, which we denote by  $dR_1$ for simplicity, satisfying both conditions. Then there would exist  a $\Herm^+(\mathbb J)$-valued function $r_1$ on $\Omega^{(1)}$ such that $dR_1(x) = r_1(x)\, d\nu_1(x)$
satisfying, for $x\in \Omega^{(1)}$
\begin{equation}\label{covd/2}
r_1(\ell x) =  \ell\, r_1(x)\,\ell^t
\end{equation}
and 
\begin{equation}\label{normd/2}
\int_{{\overline \Omega}_1}e^{-\tr x} r_1(x)\,  d\nu_k(x)= \Id_{\mathbb J}
\end{equation}
Let $c$ be a primitive idempotent and let $S=r_1(c)$. Then $S$ satisfies the conditions of Lemma \ref{Sc} and hence there exists $\alpha\in [0,+\infty)$ such that $S= \alpha P(c)$. Now all primitive idempotents are conjugate by some element of $K$ and $S$ commutes with $K$ (a consequence of \eqref{invR'}).  Hence the constant $\alpha$ does not depend on $c$. Any element $x$ of $\Omega^{(1)}$ is a multiple of a primitive idempotent and hence satisfies $x=\vert x\vert c$ for some primitive idempotent $c$. In turn, this implies that \[r_1(x) = \vert x\vert^2 r_1(c) =\alpha \vert x\vert^2 P(c) = \alpha P(x)\ .\]
Hence
\[\Tr r_1(x)= \alpha \vert x\vert^2, \qquad (r_1(x) e\vert  e ) = \alpha (x^2\vert e) = \alpha \vert x\vert^2
\]
As $\Tr r_1(x)=(r_1(x)e\vert e)$ for any $x\in \Omega^{(1)}$, the integral on the left hand side of \eqref{normd/2}, call it $\Sigma$, satisfies $\Tr(\Sigma) = (\Sigma e,e)$. As $\Tr \Id_\mathbb J= n$ and $(\Id e \vert e) = r$, $\Sigma$ cannot be equal to $\Id_\mathbb J$. This achieves the proof of Proposition \ref{no}.
\smallskip

The proof of Theorem \ref{Wallachset} is now complete.

\footnotesize{\noindent Address\\ Jean-Louis Clerc, Universit\'e de Lorraine, CNRS, IECL, F-54000 Nancy, France
\medskip

\noindent \texttt{{jean-louis.clerc@univ-lorraine.fr
}}

\end{document}

The map $\mathcal Q_\sigma : T_\Omega \times T_\Omega \longrightarrow \End(V_\sigma)$ satisfies
\begin{equation}\label{holcalQ}
 \mathcal Q_\sigma \text{ is holomorphic in $z$ and conjugate-holomorphic in $w$}\ ,
\end{equation}
\begin{equation}\label{hermcalQ}
\mathcal Q_\sigma(w,z) = \mathcal Q_\sigma(z,w)^*\ ,
\end{equation}
for all $ (z_j)_{1\leq j\leq q} \text{ in } T_\Omega$ and  for all $(\xi_j)_{1\leq j\leq q} \text{ in } V_\sigma$
\begin{equation} \label{poscalQ}
\sum_{i=1}^q\sum_{j=1}^q \big(\mathcal Q_\sigma(z_j,z_i)\xi_j, \xi_i\big)_\sigma \geq 0
\end{equation}
\begin{equation}\label{covcalQ}
\forall g\in G\qquad \mathcal Q_\sigma(g(z),g(w)) = \sigma(J(g,z)) \mathcal Q_\sigma(z,w) \sigma(J(g,w))^*\ .
\end{equation}

begin{proof}
First let $u\in U$. Then $J(\widetilde u,ie)=u$ by \eqref{Jacu} and Property \eqref{covcalQ} implies that $\mathcal Q(ie,ie)$ commutes to $\sigma(u)$, for $u\in U$, and hence, as $V_\sigma$ is irreducible, Schur lemma implies that 
\[\mathcal Q(ie,ie) = c\, \id_{V_\sigma}\] for some complex number $c$. Observe that $c$ has to be real and nonnegative, a consequence of \eqref{hermcalQ} and \eqref{poscalQ}.

Then let $x\in J$.  As $x+ie=t_x (ie)$, use \eqref{covcalQ} again to conclude that 
\[\mathcal Q(x+ie,x+ie) = c\,\id_{V_\sigma}\]
Now let $z=x+iy, y\in \Omega$,  and observe that 
\[z= x+iy = P(y^{1/2}) \left(P(y^{1/2})^{-1} x+ ie)\right)
\]
Use again (7) to conclude that 
\[\mathcal Q(x+iy, x+iy)= c\, \sigma\big(P(y^{1/2})\big)\sigma\big(P(y^{1/2})\big) = c\, \sigma\big(P(y)\big)\]
Now use holomorphy properties \eqref{holcalQ} of $\mathcal Q$ to conclude that 
\[\mathcal Q_\sigma(z,w) = c\, \sigma\left(P\left(\frac{z-\overline w}{2i}\right)\right)\ .
\]
If the constant $c$ is equal to $0$, then $E_zE_w^*=0$ for all $z,w\in T_\Omega$, and hence, for any $\xi\in V_\sigma$ and $z\in T_\Omega$
\[0=\big(E_zE_z^* \xi,\xi\big)_{V_\sigma}= (E_z^*\xi,E_z^*\xi)_{\mathcal H_\sigma}\ ,
\]
so that $E_z^*=0$ for any $z\in T_\Omega$. But then $E_z=0$ for any $z\in T_\Omega$ and so $F=0$ for any $f\in \mathcal H_\sigma$. Hence $c>0$.
\end{proof}
\noindent
{\bf Remark.} The fact that $\sigma\left(P\left(\frac{z-\overline w}{2i}\right)\right)$ satisfies conditions \eqref{holcalQ}, \eqref{hermcalQ} and  \eqref{poscalQ} are easy to verify. Property \eqref{covcalQ} is a consequence of the following identity, valid for $g\in G, z,w\in T_\Omega$
\[P(g(z)-\overline {g(w)}\big)= J(g,z) P(z-\overline w)J(g,w)^* \ .
\]
The property is easy to verify for elements of $L$ and of $N$, and requires \emph{Hua formula} for the inversion $\iota$.
\begin{proposition} The space $\mathcal H_\sigma$ is not reduced to $\{ 0\}$ if and only if, for some (and hence for any) $\xi\in V_\sigma, \xi\neq 0$
\[\int_{T_\Omega} \left(\sigma\big(P(y)\big)^{-1}  \sigma\left(P\left(\frac{z+ie}{2i}\right)\right)\, \xi\, , \,  \sigma\left(P\left(\frac{z+ie}{2i}\right)\right)\xi\right)_{V_\sigma}\, d^*z\ <\ +\infty \ .
\]
\end{proposition}

\begin{lemma} There exist two scalars $\alpha_k, \beta_k$ such that 
\[R_k = \alpha P(e_k) +\beta_k (\,.\,\vert e_k)\, e_k\ .
\]
\end{lemma}
\begin{proof}
In particular, $\ell_s$ belongs to $L^{e_k}$. Moreover, if $v\in J(e_k,\frac{1}{2})$, condition \eqref{Rkcov} implies that
\[R_kv= s\ell_s R_kv\ ,
\]
which is clearly impossible unless $R_kv=0$. Hence $R_k$ vanishes on $J(e_k,\frac{1}{2})$. A similar result holds for the action of $R_k$ on $J(e_k,0)$.
Next, let $J^{(k)} = J(e_k,1)$. The restriction to $J^{(k)}$ of the elements $[L(u), L(v)], u,v\in J^{(k)}$ generate the Lie algebra of $\Aut(J^{(k)})$ and hence the restriction to $J^{(k)}$ of the  normalizer in $K$ of $J^{(k)}$ contains the connected component $K^{(k)}$ of $\Aut(J^{(k)})$. So, by \eqref{Rkcov}, ${R_k}_{\big\vert J^{(k)}}$ commutes to $K^{(k)}$. By Lemma \ref{invquadJ}, there exists two scalars $\alpha_k, \beta_k$ such that for $v\in J^{(k)}$
\[R_k v =\alpha_k v + \beta (v\vert e_k) e_k\ .
\]
Hence for any $v\in J$,
\[R_kv = \alpha_k P(e_k)v+ \beta (v\vert e_k) e_k\ .
\]
\end{proof}
\begin{lemma} Let $a=\sum_{j=1}^k a_j c_j$ where $a_j>0$ for any $j$. Then
\[r_k(a) = \alpha_kP(a) +\beta_k (\,.\,\vert a) a
\]
\end{lemma}
\begin{proof}
For $s\in ]0,+\infty)$ let \[\ell_s=P\big(a^{1/2}+s(e-e_k)\big)\]
where $a^{1/2} =\sum_{j=1}^k a_j^{1/2}c_j$.
As $s\longrightarrow 0^+$, $\ell_s$ tends to $P(a^{1/2})$.
Now, as $\ell_s$ belongs to $L$, by \eqref{rkcov}
\[r_k(\ell_s e_k) = \ell_s\,R_k\,\ell_s
\]
and letting $s\longrightarrow 0$
\[r_k(a) = P(a^{1/2})R_k P(a^{1/2})\]
\[ = \alpha_kP(a^{1/2}) P(e_k)P(a^{1/2})+ \beta_k\, (P(a^{1/2})\,.\,\vert e_k)\,P(a^{1/2})e_k\]
\[= \alpha_k\,P(a)+\beta_k\, (.\vert a)\, a\ .
\]
\end{proof}
We now come to the proof of Proposition \ref{detrk}.
Now let $z\in J_{1/2}$ and let $x=\tau(z) a$. As $\tau(z) \in L$, \eqref{rkcov} implies
\[r_k(x) = \tau(z) r_k(a) \tau(z)^t\ .
\]
Next, let $v=v_1+v_{1/2}+v_0$ be the Peirce decomposition of an arbitrary element $v\in J$ with respect to $e_k$. Using the matrix expression \eqref{tauz} of $\tau(z)$, 
\[ \tau(z)^t = v_1 \mod (J_{1/2}\oplus J_0)\]
Hence as $r_k(a)$ vanishes on $J_{1/2}\oplus J_0$, 
\[r_k(a) \tau(z)^t =r_k(a)\ .
\]
Hence
\[r_k(x) = \tau(z) r_k(a) = \alpha_k \tau(z) P(a) + \beta_k\,(\,.\,\vert a) \tau(z) a
\]
Now as $P(a)$ and $(\,.\,\vert a)$ vanish on $J_{1/2}\oplus J_0$, 
\[\tau(z) P(a) = \tau(z) P(a) \tau(z)^t = P(\tau(z)a) = P(x)
\]
and 
\[(\, .\, \vert a) = (\tau(z)^t \,.\, \vert a) = (\,.\,\vert \tau(z) a) = (\,.\, \vert x)\ ,
\]
and hence
\[r_k(x) = \alpha_kP(x) +\beta_k (\, .\,\vert x)\,  x\ .
\]
The set of elements of the form $x=\tau(z) P(a), \ z\in J_{1/2},  a=\sum_{j=1}^k a_j c_j$ with $a_j>0$ is dense in $\mathcal O_k$, so that the formula is valid for any $x\in \mathcal O_k$.